\documentclass[12pt]{article}

\usepackage{amsmath, amsfonts, amssymb}
\usepackage{mathrsfs}
\usepackage{theorem}
\usepackage{bm}
\usepackage[dvipdf]{graphicx}

\topmargin -2cm
\oddsidemargin -0.06cm
\evensidemargin -0.06cm
\textwidth 16.42cm
\textheight 23.96cm
\parskip 1.2pt
\parindent.5cm
\parskip0.2cm

\RequirePackage[colorlinks,citecolor=blue,urlcolor=blue,bookmarksopen]{hyperref}

\newtheorem{theorem}{Theorem}[section]

\newtheorem{lemma}[theorem]{Lemma}
\newtheorem{definition}[theorem]{Definition}
\newtheorem{assumption}[theorem]{Assumption}

\newtheorem{example}[theorem]{Example}

\newcommand{\ra}{\rightarrow}

\newcommand{\RNum}[1]{\uppercase\expandafter{\romannumeral #1\relax}}

\def\L{\mathscr{L}}

\def\R{\mathbb R}

\def\N{\mathbb N}

\def\B{\mathscr B}

\def\da{\downarrow}
\def\F{\mathscr F}
\def\d{\text{\rm{d}}}
\def\E{\mathbb E}

\def\e{\text{\rm{e}}}

\def\de{\delta}

\def\pb{\mathscr{P}}

\def\W{\mathbb{W}}

\newenvironment{proof}{{\noindent\it Proof.}\ }{\hfill $\square$\par}

\numberwithin{equation}{section}

\allowdisplaybreaks
\begin{document}
	\title{Euler-Maruyama method for distribution dependent stochastic differential equation driven by multiplicative fractional Brownian motion}
	
	\author{ Guangjun Shen, Jiangpeng Wang\footnote{Corresponding author. This research is supported by the National Natural Science Foundation of China (12571160).}, and Xuekang Zhang}
	
	\maketitle
	
	\begin{abstract}
		In this paper, we establish the propagation of chaos and Euler-Maruyama method of DDSDE driven by multiplicative fractional Brownian motion with Hurst parameter $H\in (\frac{\sqrt{5}-1}{2},1)$. We have not only obtained an upper bound for the error of the Euler-Maruyama method but also verified the correctness of this result via systematic numerical simulation experiments.
		\vskip.2cm \noindent {\bf Keywords}: Distribution dependent SDE; fractional Brownian motion; Propagation of chaos; Euler-Maruyama method
		\vskip.2cm \noindent {\bf Mathematics Subject Classification:}
	\end{abstract}

	\section{Introduction}
	Stochastic differential equations (SDEs) are the stochastic generalization of deterministic differential equations (ODEs/PDEs), providing a mathematical description for dynamic systems with random perturbations. Their development stems from the fact that many real-world systems cannot be accurately characterized by deterministic models, necessitating the introduction of "stochastic noise" to capture uncertainty. The most typical noise source is Brownian motion. At this point, SDE can be described as
	\begin{equation}
		\d X_t=b(X_t)\d t+\sigma(X_t)\d B_t.
	\end{equation}
	Afterwards, Mckean \cite{Mckean} further promoted this work and accomplished the pioneering work of distribution dependent SDE (also known as McKean-Vlasov SDE). Its core idea is to describe the group effect, that is, the evolution of individuals is influenced by the distribution of the entire group's state. Compared with traditional SDE, DDSDE's coefficients not only depend on the state $X_t$, but also on the state distribution of $X_t$, and have the following form
	\begin{equation}\label{eq-1.2}
		\d X_t=b(X_t, \L_{X_t})\d t+\sigma(X_t, \L_{X_t})\d B_t,
	\end{equation}
	where $\L_{X_t}$ stands for the law of $X_t$. This class of equations gained increasing attention following Dawson?s foundational work \cite{Da} and the development of the Lion's derivative with respect to the measure variables \cite{Li}. DDSDEs are  widely used to model random phenomena across various scientific domains, including physics, biology, engineering, and neural activities, such as \cite{BFFT,BCC,BFT,Carmona2} and references therein.
	Another important feature of DDSDE is its inseparable relationship with the limit behavior of a class of interacting particle systems
	\begin{equation}\label{eq-1.3}
		\d X_t^{i,N}= b(X_t^{i,N},\widehat{\mu}_t^N)\d t+\sigma(X_t^{i,N},\widehat{\mu}_t^N)\d B_t^i,~~ i=1,\cdot\cdot\cdot,N,
	\end{equation}
	where $\widehat{\mu}_t^N$ means the empirical distribution corresponding to $X_t^{1,N},\cdot\cdot\cdot, X_t^{N,N},$ namely,
	$
	\widehat{\mu}_t^N:=\frac{1}{N}\sum^N_{i=1}\delta_{X_t^{i,N}},
	$
	where $\delta_x$ denotes the Dirac measure at point $x$.
	$B^1:=(B^1_t)_{t\geq0}, \cdot\cdot\cdot, B^N:=(B^N_t)_{t\geq0}$ are independent $d$-dimensional Brownian motions on some complete filtered probability space. When the number of particles N approaches infinity, the mean field limit of the following particle system \eqref{eq-1.3} under mean field interaction will tend towards \eqref{eq-1.2}, which is known as propagation of chaos. There are numerous articles on DDSDE research. Huang and  Wang \cite{3} studied the Well-posedness of DDSDE driven by non-degenerate noise. Crisan and  McMurray \cite{7} studied the integration by parts formulae of solutions to SDEs with general McKean-Vlasov interactions. And theoretical research on DDSDE, including traversal \cite{4}, Harnack inequality \cite{5}, Bismut formula \cite{6}, etc.
	
	Similar to ordinary SDEs, it is also difficult to obtain the true solutions of DDSDEs. Therefore, developing numerical methods adapted to DDSDEs becomes particularly crucial. In practice, Monte Carlo ideas, stochastic particle methods, and the theory of propagation of chaos are commonly employed to construct numerical methods for such equations. Bossy and Talay \cite{8} is one of the earliest works to study the numerical approximation of DDSDE and establishes convergence of an Euler-Maruyama particle scheme with strong order
	$1/2$ in both the time-step and the number of particles under Lipschitz conditions. Reis, Engelhardt and Smith \cite{15} drew inspiration from the design ideas in \cite{23,36} and proposed a tamed Euler scheme, study of stable time-stepping schemes for interacting particle systems with a drift that grows super-linearly in the state component and with globally Lipschitz continuous diffusion term.
	He, Gao, Zhan and Guo \cite{JSW} propose an Euler?Maruyama
	method for McKean?Vlasov SDEs driven by fractional Brownian motion.
	
	Fractional Brownian motion is a generalization of standard Brownian motion, which is a non-Markov Gaussian process. The fBm exhibits long-range dependence for $H> 1/2$,
	and short-range dependence for $H < 1/2$. FBm with long-term memory characteristics can compensate for the shortcomings of standard Brownian motion in solving memory problems and construct more realistic market models. Such as, Modeling Complex Phenomena(see, for example, Baillie \cite{RTB}, Sottinen \cite{TPS}), Applications in Physics(see, for example, Shen and Tsai \cite{SWS}), Advanced Mathematical Research(we can see  Biagini,  Hu, {\O}ksendal and  Zhang \cite{Biagini}).
	Note that, for the   McKean-Vlasov SDEs driven by additive fBm, Fan et al. \cite{FH}   showed the well-posedness and derived a Bismut type formula for the Lions derivative using Malliavin calculus. Galeati et al. \cite{Galeati} studied McKean-Vlasov SDEs with irregular, possibly distributional drift, driven by additive fBm of Hurst parameter $H\in (0,1)$ and established strong well-posedness under a variety of assumptions on the drifts. Shen, Xiang and Wu \cite{xj} studied averaging principle of McKean-Vlasov SDEs driven simultaneously by fBm with Hurst index $H>\frac12$ and standard Brownian motion under certain averaging conditions.
	Bauer and Meyer-Brandis \cite{Bauer} established existence and uniqueness results of solutions to McKean-Vlasov equations driven by cylindrical fBm in an infinite-dimensional Hilbert space setting with irregular drift. Buckdahn and Jing \cite{Buckdahn1} considered mean-field SDEs driven by fBm and related stochastic control problems.
	Recently, Fan and Zhang \cite{fan} introduced a H\"older space of probability measure paths
	which is a complete metric space under a new metric, this solves the existence and uniqueness of DDSDE driven by multiplicative fBm, as well as large and moderate deviation principles.
	
	As far as we know, for DDSDE, some literatures have studied the case driven by additive fBM, but there are few studies on multiplicative noise. In this paper, consider DDSDE driven by   multiplicative fBm of the form
	\begin{equation}\label{eq-1}
		\begin{aligned}
			\d X_t&=b(X_t,\L_{X_t})\d t+\sigma(X_t,\L_{X_t})\d B^H_t,\quad t\in [S,T],
		\end{aligned}
	\end{equation}
	with the initial value $X_S$ is an $\F_S$-measurable random variable, $S\in[0,T)$. $b:\R^d  \times \pb_\theta(\R^d) \to \R^d$, $\sigma:\R^d \times \pb_\theta(\R^d) \to \R^{d\times d}$ are Borel measurable functions? and $B^H_t$ is a $d$-dimensional fBm with Hurst parameter $H \in (\frac{1}{2}, 1)$.
	
	The structure of this work is as follows. In Section 2, we provide the necessary notations and preliminary knowledge. The study on the propagation of chaos is presented in Section 3. In Section 4, we prove the Euler-Maruyama method for Euler-Maruyama method of DDSDE driven by multiplicative fractional Brownian motion. Numerical simulations are presented in Section 5.
	
	Throughout the paper, we let $C$ stand for positive constants, with or without
	subscript, and their value may be different in different appearances.

	\section{Preliminaries}
	\subsection{Notation.} In this paper, we need to use the following notations.
	We use $|\cdot|$ and $?\cdot, \cdot?$ for the Euclidean norm and inner product, respectively. Let $a \wedge b = \min\{a, b\}$ and $a \vee b = \max\{a, b\}$.
	$C([a,b];\R^d)$ denotes the space of all $\R^d$-valued continuous functions $\varphi:[a,b]\ra\R^d$ with the norm
	$\|\varphi\|_{a,b,\infty}=\sup_{a\leq s\leq b}|\varphi(s)|.$
	For any $\alpha\in(0,1), C^\alpha([a,b];\R^d)$ is the set of all $\R^d$-valued $\alpha$-H\"{o}lder continuous functions.
	If $\varphi\in C^\alpha([a,b];\R^d)$, we will make use of the notation
	\begin{align}\label{In8}
		\|\varphi\|_{a,b,\alpha}=\sup\limits_{a\leq s<t\leq b}\frac{|\varphi(t)-\varphi(s)|}{|t-s|^\alpha}.
	\end{align}
	When the interval is $[0,T]$, we abbreviate $\|\varphi\|_{0,T,\infty}$ and $\|\varphi\|_{0,T,\alpha}$ as $\|\varphi\|_{T,\infty}$ and $\|\varphi\|_{T,\alpha}$, respectively.
	Let $$\mathscr{P}_{\theta}(\mathbb{R}^{d}):=\Big\{\mu\in{\mathscr{P}}(\mathbb{R}^{d}):\mu(|\cdot|^{\theta}):=
	\int_{\mathbb{R}^{d}}|x|^{\theta}\mu(dx)<\infty\Big\},\quad \theta\in[1,\infty),$$
	where ${\mathscr{P}}$ is the set of probability measure on $(\mathbb{R}^{d},\B(\mathbb{R}^{d}))$. The space $\mathscr{P}_{\theta}(\mathbb{R}^{d})$ is a Polish space under the $L^{\theta}$-Wasserstein distance
	$${W}_{\theta}(\mu_{1},\mu_{2}):=\inf_{\pi\in\mathscr{C}(\mu_{1},\mu_{2})}\Big(\int_{\mathbb{R}^{d}
		\times\mathbb{R}^{d}}|x-y|^{\theta}\pi(dx,dy)\Big)^{\frac{1}{\theta}},\quad \mu_{1},\mu_{2}\in\mathscr{P}_{\theta}(\mathbb{R}^{d}),$$
	where $\mathscr{C}(\mu_{1},\mu_{2})$ is the set of probability measures on $\mathbb{R}^{d}\times\mathbb{R}^{d}$ with marginals $\mu_{1}$ and $\mu_{2}$, respectively.
	
	\subsection{Fractional integral and derivative}
	
	Let $a,b\in\R$ with $a<b$.
	For $f\in L^1([a,b],\R)$ and $\alpha>0$, the left-sided (respectively right-sided) fractional Riemann-Liouville integral of $f$ of order $\alpha$
	on $[a,b]$ is defined as
	\begin{align}\label{FrIn}
		&I_{a+}^\alpha f(x)=\frac{1}{\Gamma(\alpha)}\int_a^x\frac{f(y)}{(x-y)^{1-\alpha}}\d y\\
		&\qquad\left(\mbox{respectively}\ \ I_{b-}^\alpha f(x)=\frac{(-1)^{-\alpha}}{\Gamma(\alpha)}\int_x^b\frac{f(y)}{(y-x)^{1-\alpha}}\d y\right),\nonumber
	\end{align}
	where $x\in(a,b)$ a.e., $(-1)^{-\alpha}=\e^{-i\alpha\pi}$ and $\Gamma$ denotes the Gamma function.
	In particular, if $\alpha=n\in\N$, they are consistent with the usual $n$-order iterated integrals.
	
	Fractional differentiation can be given as an inverse operation.
	Let $\alpha\in(0,1)$ and $p\geq1$.
	If $f\in I_{a+}^\alpha(L^p([a,b],\R))$ (respectively $I_{b-}^\alpha(L^p([a,b],\R)))$, then the function $g$ satisfying $I_{a+}^\alpha g=f$ (respectively $I_{b-}^\alpha g=f$) is unique in $L^p([a,b],\R)$ and it coincides with the left-sided (respectively right-sided) Riemann-Liouville derivative
	of $f$ of order $\alpha$ shown by
	\begin{align*}
		&D_{a+}^\alpha f(x)=\frac{1}{\Gamma(1-\alpha)}\frac{\d}{\d x}\int_a^x\frac{f(y)}{(x-y)^\alpha}\d y\\
		&\qquad\left(\mbox{respectively}\ D_{b-}^\alpha f(x)=\frac{(-1)^{1+\alpha}}{\Gamma(1-\alpha)}\frac{\d}{\d x}\int_x^b\frac{f(y)}{(y-x)^\alpha}\d y\right).
	\end{align*}
	The corresponding Weyl representation is of the form
	\begin{align}\label{FrDe}
		&D_{a+}^\alpha f(x)=\frac{1}{\Gamma(1-\alpha)}\left(\frac{f(x)}{(x-a)^\alpha}+\alpha\int_a^x\frac{f(x)-f(y)}{(x-y)^{\alpha+1}}\d y\right)\\
		&\qquad\left(\mbox{respectively}\ \ D_{b-}^\alpha f(x)=\frac{(-1)^\alpha}{\Gamma(1-\alpha)}\left(\frac{f(x)}{(b-x)^\alpha}+\alpha\int_x^b\frac{f(x)-f(y)}{(y-x)^{\alpha+1}}\d y\right)\right),\nonumber
	\end{align}
	where the convergence of the integrals at the singularity $y=x$ holds pointwise for almost all $x$ if $p=1$ and in the $L^p$ sense if $p>1$.
	For more details, we refer the reader to \cite{SKM93}.
	
	Suppose that $f\in C^\lambda([a,b];\R^d)$ and $g\in C^\mu([a,b];\R^d)$ with $\lambda+\mu>1$.
	By \cite{Young36a}, the Riemann-Stieltjes integral $\int_a^bf\d g$ exists.
	In \cite{Zahle}, Z\"{a}hle provides an explicit expression for the integral $\int_a^bf\d g$ in terms of fractional derivatives.
	That is, let $\lambda>\alpha$ and $\mu>1-\alpha$ with $\alpha\in(0,1)$.
	Then the Riemann-Stieltjes integral can be expressed as
	\begin{align}\label{fb}
		\int_a^bf\d g=(-1)^\alpha\int_a^bD_{a+}^\alpha f(t)D_{b-}^{1-\alpha}g_{b-}(t)\d t,
	\end{align}
	where $g_{b-}(t)=g(t)-g(b)$.
	The relation \eqref{fb} can be regarded as fractional integration by parts formula.
	
	\subsection{fractional Brownian motion}
	Recall that the fBm $B^{H}=(B^{H,1},\cdots,B^{H,d})$  with Hurst index $H\in
	(0,1)$  is a centered Gaussian process, whose covariance structure is defined by
	$$\mathbb{E}(B_{t}^{H,i}B_{s}^{H,j})=R_{H}(t,s)\delta_{i,j},\quad s,t\in[0,T],\quad i,j=1,\cdots,d$$
	with $R_{H}(t,s)=\frac{1}{2}(t^{2H}+s^{2H}-|t-s|^{2H})$.
	Thus, Kolmogorov's continuity criterion implies that fBm is H\"{o}lder continuous of order $\delta$ for any $\delta<H.$
	Besides, $R_{H} (t, s)$ has the following integral representation
	$$R_{H}(t,s)=\int_{0}^{t\wedge s}K_{H}(t,r)K_{H}(s,r)dr,$$
	where the deterministic kernel $K_{H}(t,s)$ is   given by
	$$K_{H}(t,s)=C_{H}s^{\frac{1}{2}-H}\int_{s}^{t}(u-s)^{H-\frac{3}{2}}u^{H-\frac{1}{2}}du,\quad t>s,$$
	where $C_{H}=\sqrt{\frac{H(2H-1)}{\mathcal{B}(2-2H,H-1/2)}}$ and $\mathcal{B}$ standing for the Beta function. If $t\leq s$, we set $K_{H}(t,s)=0$.
	For
	$H=\frac1 2$, $B^H$ coincides with the standard Brownian motion $B$, but
	$B^H$ is neither a semimartingale nor a Markov process unless
	$H=\frac 12$. As a consequence, some classical techniques of stochastic
	analysis are not applicable.
	
	\subsection{The Lions derivative}
	Let $\|\cdot\|_{L^2_\mu}$ denotes for the norm of the space $ L^2(\R^d\ra\R^d,\mu)$ and for a random variable $X$, $\L_X$ denotes its distribution.
	
	\begin{definition}
		Let $f:\mathscr{P}_2(\R^d)\ra\R$ and $g:\R^d\times\mathscr{P}_2(\R^d)\ra\R$.
		\begin{enumerate}
			\item[(1)]  $f$ is called $L$-differentiable at $\mu\in\mathscr{P}_2(\R^d)$, if the functional
			\begin{align*}
				L^2(\R^d\ra\R^d,\mu)\ni\phi\mapsto f(\mu\circ(\mathrm{Id}+\phi)^{-1}))
			\end{align*}
			is Fr\'{e}chet differentiable at $0\in L^2(\R^d\ra\R^d,\mu)$. That is, there exists a unique $\gamma\in L^2(\R^d\ra\R^d,\mu)$ such that
			\begin{align*}
				\lim_{\|\phi\|_{L^2_\mu}\ra0}\frac{f(\mu\circ(\mathrm{Id}+\phi)^{-1})-f(\mu)-\mu(\langle\gamma,\phi\rangle)}{\|\phi\|_{L^2_\mu}}=0.
			\end{align*}
			In this case, $\gamma$ is called the $L$-derivative of $f$ at $\mu$ and denoted by $D^Lf(\mu)$.
			
			\item[(2)] $f$ is called $L$-differentiable on $\mathscr{P}_2(\R^d)$, if the $L$-derivative $D^Lf(\mu)$ exists for all $\mu\in\mathscr{P}_2(\R^d)$.
			Furthermore, if for every $\mu\in\mathscr{P}_2(\R^d)$ there exists a $\mu$-version $D^Lf(\mu)(\cdot)$ such that $D^Lf(\mu)(x)$ is jointly continuous in $(\mu,x)\in\mathscr{P}_2(\R^d)\times\R^d$, we denote $f\in C^{(1,0)}(\mathscr{P}_2(\R^d))$.
			
			\item[(3)] $g$ is called differentiable on $\R^d\times\mathscr{P}_2(\R^d)$, if for any $(x,\mu)\in\R^d\times\mathscr{P}_2(\R^d)$,
			$g(\cdot,\mu)$ is differentiable and $g(x, \cdot)$ is $L$-differentiable.
			If $\nabla g(\cdot,\mu)(x)$ and $D^Lg(x,\cdot)(\mu)(y)$ are jointly continuous in $(x,y,\mu)\in\R^d\times\R^d\times\mathscr{P}_2(\R^d)$,
			we denote $g\in C^{1,(1,0)}(\R^d\times\mathscr{P}_2(\R^d))$.
			If moreover the derivatives
			$$\nabla^2g(\cdot,\mu)(x),\  \nabla(D^Lg(\cdot,\cdot)(\mu)(y))(x),\ \nabla(D^Lg(x,\cdot)(\mu)(\cdot))(y),\ D^L(D^Lg(x,\cdot)(\cdot)(y))(\mu)(z),$$
			exist and are jointly continuous in the corresponding arguments $(x,\mu), (x,\mu,y)$ or $(x,\mu,y,z)$,
			we denote $g\in C^{2,(2,0)}(\R^d\times\mathscr{P}_2(\R^d))$.
			If $g\in C^{2,(2,0)}(\R^d\times\mathscr{P}_2(\R^d))$ and with all these derivatives is bounded on $\R^d\times\mathscr{P}_2(\R^d)$,
			we denote $g\in C_b^{2,(2,0)}(\R^d\times\mathscr{P}_2(\R^d))$.
		\end{enumerate}
	\end{definition}
	In order to ease notations, we denote $D^{L,2}g=D^L(D^Lg)$, and for a vector-valued function $f=(f_i)$ or a matrix-valued function $f=(f_{ij})$ with $L$-differentiable components, we simply write
	\begin{align*}
		D^Lf(\mu)=(D^Lf_i(\mu))  \ \  \mathrm{or}\ \ D^Lf(\mu)=(D^Lf_{ij}(\mu)).
	\end{align*}
	Let us finish this part by giving a useful formula for the $L$-derivative that are needed later on, which is due to \cite[Theorem 6.5]{Cardaliaguet13} and \cite[Proposition 3.1]{6}.
	\begin{lemma}\label{FoLD}
		Let $(\Omega,\mathscr{F},\mathbb{P})$ be an atomless probability space and $X,Y\in L^2(\Omega\ra\R^d,\mathbb{P})$.
		If $f\in C^{(1,0)}(\mathscr{P}_2(\R^d))$, then
		\begin{align*}
			\lim_{\epsilon\da0}\frac {f(\L_{X+\epsilon Y})-f(\L_X)} \epsilon=\E\langle D^Lf(\L_X)(X),Y\rangle.
		\end{align*}
	\end{lemma}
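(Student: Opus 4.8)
The plan is to read the claim as the existence and value of the one-sided directional (Gâteaux) derivative of $f$ along the perturbation $X\mapsto X+\epsilon Y$, and to extract it from the Fréchet differentiability that is built into the definition of $C^{(1,0)}(\mathscr{P}_2(\R^d))$. The one genuine subtlety is that the definition only supplies differentiability of $\phi\mapsto f(\mu\circ(\mathrm{Id}+\phi)^{-1})$ along \emph{deterministic} vector fields $\phi\in L^2_\mu$, that is, along $\sigma(X)$-measurable perturbations of the form $\phi(X)$, whereas the direction $Y$ is an arbitrary element of $L^2(\Omega\ra\R^d,\p)$ which need not be $\sigma(X)$-measurable. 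Bridging this gap is the whole content of the lemma, and it is exactly where the atomlessness of $(\Omega,\F,\p)$ is used.

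First I would dispose of the $\sigma(X)$-measurable component of the direction. Set $\psi(X):=\E[Y\,|\,X]$, which lies in $L^2_{\L_X}$ since $Y\in L^2$ and conditional expectation is an $L^2$-contraction, and record the exact identity $\L_{X+\epsilon\psi(X)}=\L_X\circ(\mathrm{Id}+\epsilon\psi)^{-1}$. Applying the defining first-order expansion of the $L$-derivative with $\mu=\L_X$ and $\phi=\epsilon\psi$, and dividing by $\epsilon$, the remainder $o(\|\epsilon\psi\|_{L^2_\mu})=o(\epsilon)$ vanishes in the limit, so that $\lim_{\epsilon\da0}\epsilon^{-1}\bigl(f(\L_{X+\epsilon\psi(X)})-f(\L_X)\bigr)=\int_{\R^d}\langle D^Lf(\L_X)(x),\psi(x)\rangle\,\L_X(dx)=\E\langle D^Lf(\L_X)(X),\E[Y|X]\rangle$. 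Since $D^Lf(\L_X)(X)$ is $\sigma(X)$-measurable, the tower property rewrites the last quantity as $\E\langle D^Lf(\L_X)(X),Y\rangle$, which is precisely the desired right-hand side.

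It then remains to show that the conditionally centred part $Z:=Y-\E[Y|X]$ contributes nothing to first order, i.e. $f(\L_{X+\epsilon Y})-f(\L_{X+\epsilon\psi(X)})=o(\epsilon)$; this is the main obstacle. The clean way to handle it is to pass to the lifted functional $\tilde f(\xi):=f(\L_\xi)$ on $L^2(\Omega\ra\R^d,\p)$ and invoke the structural theorem for the Lions derivative, which is the role played by \cite[Theorem 6.5]{Cardaliaguet13} and \cite[Proposition 3.1]{6}: it upgrades the pushforward Fréchet differentiability at $0$, together with the joint continuity built into $C^{(1,0)}$, to genuine differentiability of $\tilde f$ at $X$ whose gradient is the $\sigma(X)$-measurable random variable $D^Lf(\L_X)(X)$. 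Because the gradient is $\sigma(X)$-measurable while $\E[Z|X]=0$, the directional derivative along $Z$ equals $\E\langle D^Lf(\L_X)(X),Z\rangle=\E\langle D^Lf(\L_X)(X),\E[Z|X]\rangle=0$, which gives the claimed negligibility and, combined with the previous step, the full formula. The delicate point concentrated here is that atomlessness is exactly what permits the Fréchet derivative, a priori available only along $\sigma(X)$-measurable directions $\phi(X)$, to be extended to all of $L^2(\Omega\ra\R^d,\p)$ with the gradient still represented by the same function $D^Lf(\L_X)$ evaluated at $X$; equivalently, it forces the gradient to be $\sigma(X)$-measurable, which is what makes the centred directions drop out.
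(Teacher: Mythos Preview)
The paper does not prove this lemma at all: it is stated as a quotation of \cite[Theorem 6.5]{Cardaliaguet13} and \cite[Proposition 3.1]{6}, with no argument given. So there is no ``paper's own proof'' to compare against; the lemma is imported wholesale from those references.

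Your argument is correct as far as it goes, but it is circular in structure. You split $Y=\psi(X)+Z$ with $\psi(X)=\E[Y\mid X]$, handle the $\sigma(X)$-measurable piece directly from the definition, and then for the centred piece you invoke the structural theorem from the very references the paper cites, namely that the lift $\tilde f(\xi)=f(\L_\xi)$ is Fr\'echet differentiable on all of $L^2(\Omega\to\R^d,\p)$ with gradient $D^Lf(\L_X)(X)$. But once you have that Fr\'echet differentiability, the lemma follows in one line for the full direction $Y$, with no decomposition needed:
\[
f(\L_{X+\epsilon Y})-f(\L_X)=\tilde f(X+\epsilon Y)-\tilde f(X)=\epsilon\,\E\langle D^Lf(\L_X)(X),Y\rangle+o(\epsilon).
\]
So your conditional-expectation splitting is not doing any work; all the content still lives in the cited structural theorem, which is precisely what the paper is quoting. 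If your intention was to give a self-contained proof, the genuine task is to \emph{prove} that structural theorem (this is where atomlessness is really used, to build an isometry between $L^2(\Omega,\p)$ and $L^2(\R^d,\L_X)\oplus(\text{complement})$ that lets one transport the pushforward differentiability to the whole space), not to invoke it. If your intention was merely to explain why the lemma follows from the citations, then the decomposition should be dropped and the one-line derivation above given instead.
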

	
	\subsection{H\"older space of probability measure paths}
	When addressing the uniqueness of solutions to the equation \eqref{eq-1}, it is necessary to consider the integral between the following two solutions $X_t,Y_t$:
	\begin{equation}\label{int-si-si}
		\begin{split}
		&\int_s^t(\sigma(X_r,\L_{X_r})-\sigma(Y_r,\L_{Y_r}))\d B^H_r\\
		&=(-1)^{\alpha}\int_s^tD_{s+}^\alpha(\sigma(X_\cdot,\L_{X_\cdot})-\sigma(Y_\cdot,\L_{Y_\cdot}))(r)D_{t-}^{1-\alpha}B_{t-}^H(r)\d r\\
		&=\frac{(-1)^{\alpha}}{\Gamma(1-\alpha)}\int_s^t
		\bigg[\bigg(\frac{\sigma(X_r ,\L_{X_r})-\sigma(Y_r,\L_{Y_r})}{(r - s)^\alpha}\cr
		&\quad\quad+\alpha\int_s^r\frac{\sigma(X_r,\L_{X_r})-\sigma(Y_r,\L_{Y_r})-\left(\sigma(X_u,\L_{X_u})-\sigma(Y_u,\L_{Y_u})\right)}
		{(r - u)^{1+\alpha}}\d u\bigg)\bigg]D_{t-}^{1-\alpha}B_{t-}^H(r)\d r.
	\end{split}
	\end{equation}
	Since $C^\beta([0,T];\R ^d)$ is not a separable space under the H\"older norm, i.e. $C^\beta([0,T];\R ^d)$ is not a Polish space under the H\"older norm,  it is essential to introduce $\W_{2,T,\beta}(\mu,\nu)$ to control the distribution part in the stochastic integral \eqref{int-si-si}. Let $T>0$ be fixed in this part. To simplify the notation, we set $W_T^d:=C([0,T];\R^d)$, and let $\mathscr P_2(W_T^d)$ be all probability measures $\mu$ on $W_T^d$ such that $\mu(\|\cdot\|_{T,\infty}^2)<+\infty$,
	in which the distance is defined as
	\begin{align}\label{AddHO1}
		\W_{2,T}(\mu,\nu)=\inf_{\pi\in \mathscr C(\mu,\nu)}\left(\int_{W_T^d\times W_T^d}\|\gamma_1-\gamma_2\|_{T,\infty}^2\pi(\d \gamma_1,\d \gamma_2)\right)^\frac 1 2.
	\end{align}
	For $\mu\in\mathscr P_2(W_T^d)$ and $0\leq s<t\leq T$, let $\mu_s$ and $\mu_{s,t}$ stand for the respective marginals of $\mu$ at $s$ and $(s,t)$,
	and let $\mu_{s,t}^\Delta$ be the distribution of the following random variable on $(\R^{2d},\mathscr B(\R^{2d}),\mu_{s,t})$:
	\[\R^d\times\R^d\ni(x,y)\mapsto x-y\in\R^d.\]
	Now, for fixed $\beta\in(0,1)$, we define
	\begin{align}\label{AddHO3}
		\mathscr P_{2,\beta}(W_T^d):=\left\{\mu\in \mathscr P_2(W_T^d)~\Big|~\sup_{0\leq s<t\leq T}\frac {\sqrt{\mu_{s,t}^\Delta (|\cdot|^2)}} {(t-s)^\beta}<+\infty\right\}.
	\end{align}
	For any $x=(x_1,x_2),y=(y_1,y_2)\in\R^{2d}$, we first let
	\begin{align*}
		|x-y|_M=|x_1-y_1|\vee |x_2-y_2|,
	\end{align*}
	and for any $\mu,\nu\in\mathscr P_2(W_T^d)$ and any $0\leq s_1<s_2\leq T$, define
	\begin{align*}
		\W_{2}(\mu_{s_1,s_2},\nu_{s_1,s_2}):=\inf_{\pi_{s_1,s_2}\in\mathscr C(\mu_{s_1,s_2},\nu_{s_1,s_2})}\left(\int_{\R^{2d}\times\R^{2d}} |x-y|_M^2\pi_{s_1,s_2}(\d x ,\d y)\right)^\frac 1 2.
	\end{align*}

	Let $\mu,\nu\in\mathscr P_2(W_T^d)$. For $0\leq s_1<s_2\leq T$, and a coupling $\pi_{s_1,s_2}\in \mathscr C(\mu_{s_1,s_2},\nu_{s_1,s_2})$, let $\pi_{s_1}$ and $\pi_{s_2}$  be the marginal distributions of $\pi_{s_1,s_2}$ at $s_1$ and $s_2$ respectively. Then $\pi_{s_i}\in\mathscr C(\mu_{s_i},\nu_{s_i})$, $i=1,2$. Denote by $\mathscr C_{opt}(\mu_{s_1,s_2},\nu_{s_1,s_2})$ the optimal couplings of $(\mu_{s_1,s_2},\nu_{s_1,s_2})$ with respect to the above $\W_2$-distance, 
	and we define for any $\mu,\nu\in\mathscr P_2(W_T^d)$,
	\begin{align}\label{AddHO2}
		W_{2,s_1,s_2}^c(\mu,\nu)=\inf\left\{\mathscr q{\pi_{s_1,s_2}(c)}~\Big|~\pi_{s_1,s_2}\in \mathscr C_{opt}(\mu_{s_1,s_2},\nu_{s_1,s_2})\right\},
	\end{align}
	where
	$\pi_{s_1,s_2}(c):=\int_{\R^{2d}\times\R^{2d}} c(x,y)\pi_{s_1,s_2}(\d x ,\d y)$ with $c$ giving by
	\[c(x,y)=|x_1-y_1-(x_2-y_2)|^2, \ \ x=(x_1,x_2),y=(y_1,y_2)\in\R^{2d}.\]	
	Let
	\begin{equation}\label{3.10}
		\W_{2,T,\beta}(\mu,\nu)=\W_{2,T}(\mu,\nu)+\sup_{0\leq s_1<s_2\leq T}\frac {W_{2,s_1,s_2}^c(\mu,\nu)} {(s_2-s_1)^\beta},\ \ \mu,\nu\in\mathscr P_{2,\beta}(W_T^d).
	\end{equation}
	We can learn that The space $\mathscr P_{2,\beta}(W_T^d)$ is a complete metric space under the metric $\W_{2,T,\beta}$ (For more details, see Theorem 3.4 in \cite{fan}). Noting that the above results can extend naturally to the subinterval of $[0,T]$.
	That is, for each $S\in[0,T)$, let $W_{S,T}^d=C([S,T];\R^d)$, and $\W_{2,S,T}, \W_{2,S,T,\beta}$ be defined on $\mathscr P_{2,\beta}(W_{S,T}^d)$ similarly
	to \eqref{AddHO1} and \eqref{3.10}, then $(\mathscr P_{2,\beta}(W_{S,T}^d),\W_{2,S,T,\beta})$ is a complete metric space.
	Finally,let
	\begin{align}\label{AddDef}
		\|\mu\|_{2,S,T,\beta}&=\sqrt{\mu(\|\cdot\|_{S,T,\infty}^2)}+\sup_{S\leq s<t\leq T}\frac {\sqrt{\mu_{s,t}^\Delta(|\cdot|^2)}} {(t-s)^\beta},
	\end{align}
	and we simply write $\|\mu\|_{2,T,\beta}$ as $\|\mu\|_{2,S,T,\beta}$ if $S=0$. We remark that $\|\mu\|_{2,S,T,\beta}=\W_{2,S,T,\beta}(\mu,\de_{\bf 0})$.

	\subsection{Well-posedness}
	\begin{assumption}\label{ass-2.3}
		There exists a constant $K > 0$ such that
		\begin{align*}
			|b(x,\mu)-b(y,\nu)|\leq K(|x-y|+W_2(\mu,\nu)),\quad x,y\in \R^d, \mu,\nu\in\pb_2(\R^d),
		\end{align*}
		and $\sigma_{ij}\in C_b^{2,(2,0)}(\R^d\times\pb_2(\R^d)),1\leq i,j \leq d$.
	\end{assumption}
	
	\begin{definition}
		We say that $\{X_t\}_{t\in[0,T]}$ is a solution of equation \eqref{eq-1}, if $\{X_t\}_{t\in[0,T]}$ is an adapted process such that for any $\beta\in(0,H)$, $\mathbb P$-a.s. $X\in C^{\beta}([0,T];\R^d)$,
		\begin{equation}\label{in-mom}
			\E\left(\|X\|_{T,\infty}^2+\|X\|_{T,\beta}^2\right)<+\infty,
		\end{equation}
		and $X_t$ satisfies equation \eqref{eq-1}.
	\end{definition}
	
	\begin{lemma}\label{lem-2.5}\cite[Lemma 3.7]{fan}
		Let $\beta\in (\frac 1 2,H)$, $\beta_1\in [\beta,H)$ and $1-\beta<\alpha<\beta$.
		Assume that  $\mu,\nu\in\mathscr P_{2,\beta}(W_{S,T}^d)$ and $\sigma_{ij}\in  C_b^{2,(2,0)}(\R^d\times\mathscr P_2(\R^d)), 1\leq i,j\leq d$.
		Then for any $S\leq s<t\leq T$ with $t-s\leq 1$,
		\begin{align*}
			&\left|\int_s^t\left(\sigma(X_r,\mu_r)-\sigma(Y_r,\nu_r)\right)\d B^H_r\right|\cr
			\le&\|B^H\|_{s,t,\beta_1} \left(\Lambda_1 (t-s)^{\beta_1}+\Lambda_2  (t-s)^{\alpha+\beta_1}\right)\|X-Y\|_{s,t,\infty}\cr
			&+\Lambda_3 \|B^H\|_{s,t,\beta_1} (t-s)^{\beta+\beta_1}\|X-Y\|_{s,t,\beta}\cr
			&+\|B^H\|_{s,t,\beta_1} \left(\Lambda_4 (t-s)^{\beta_1}+\Lambda_5 (t-s)^{\alpha+\beta_1}\right)\W_{2,S,T,\beta}(\mu ,\nu),
		\end{align*}
		where
		\begin{equation}\label{1-EsNoi}
			\begin{split}
			&\Lambda_1 := \frac {C_0\mathcal{B}(1-\alpha,\alpha+\beta_1)} {\Gamma(1-\alpha)} \|\nabla\sigma\|_\infty, \\
			&\Lambda_2 := \frac {2^{3-\frac {\alpha} {\beta}} \beta C_0\|\nabla\sigma\|_\infty^{\frac {\beta-\alpha} {\beta}}} {(\beta-\alpha)(\alpha+\beta_1)\Gamma(1-\alpha)}\Big[\left( \|\nabla^2\sigma\|_\infty \left( {\|X\|_{s,t,\beta}\vee\|Y\|_{s,t,\beta}} \right)\right)^{\frac {\alpha} {\beta}}\cr
			&\qquad +\left( \|D^L\nabla\sigma\|_\infty \left(\W_{2,S,T,\beta}(\mu,\de_{\bf 0})\wedge \W_{2,S,T,\beta}(\nu,\de_{\bf 0})\right)\right)^{\frac {\alpha} {\beta}} \Big], \\
			&\Lambda_3 :=\frac {\alpha C_0\|\nabla\sigma\|_\infty\mathcal{B}(\beta-\alpha+1,\alpha+\beta_1)} {(\beta-\alpha)\Gamma(1-\alpha)}, \\
			&\Lambda_4 :=\frac {C_0\|D^L \sigma\|_\infty } {\Gamma(1-\alpha)} \left( \mathcal{B}(1-\alpha,\alpha+\beta_1)+\frac {\alpha  \mathcal{B}(\beta-\alpha+1,\beta_1+\alpha)} {\beta-\alpha} \right), \cr
			&\Lambda_5 :=\frac {2^{3-\frac {\alpha} {\beta}} \beta C_0 \|D^L\sigma\|_\infty^{\frac {\beta-\alpha} {\beta}}} {(\beta-\alpha)(\beta_1+\alpha)\Gamma(1-\alpha)} \Big[ (\|\nabla_1 D^L\sigma\|_\infty (\|X\|_{s,t,\beta}\wedge\|Y\|_{s,t,\beta}))^{\frac {\alpha} {\beta}}\cr
			&\qquad + \left((\|D^{L,2}\sigma\|_\infty+\|\nabla_2 D^L\sigma\|_\infty)( \W_{2,S,T,\beta}(\mu,\de_{\bf 0})\vee\W_{2,S,T,\beta}(\nu,\de_{\bf 0})) \right)^{\frac {\alpha}{\beta}}  \Big],
		\end{split}
	\end{equation}
	in which $C_0:=\frac{\beta_1}{\Gamma(\alpha)(\alpha+\beta_1 -1)}$, $\nabla_1 D^L\sigma$ and $\nabla_2 D^L\sigma$ stand for the gradient operators of $D^L\sigma(x,\cdot)(\mu)(y)$ on $x$ and $y$, respectively.
	\end{lemma}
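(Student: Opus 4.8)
The plan is to begin from the fractional integration-by-parts identity \eqref{fb}, which rewrites the Young integral as in \eqref{int-si-si}, namely
\[
\frac{(-1)^{\alpha}}{\Gamma(1-\alpha)}\int_s^t D_{s+}^{\alpha}f(r)\,D_{t-}^{1-\alpha}B_{t-}^{H}(r)\,\d r,\qquad f(r):=\sigma(X_r,\mu_r)-\sigma(Y_r,\nu_r),
\]
and then to estimate the two fractional derivatives separately. For the noise factor I would use the standard pointwise bound $|D_{t-}^{1-\alpha}B_{t-}^{H}(r)|\le C_0\|B^H\|_{s,t,\beta_1}(t-r)^{\alpha+\beta_1-1}$ coming from the Weyl representation \eqref{FrDe}, with $C_0=\beta_1/(\Gamma(\alpha)(\alpha+\beta_1-1))$; the hypothesis $1-\beta<\alpha<\beta\le\beta_1$ guarantees $\alpha+\beta_1>1$, which is what renders the Weyl integral defining $D_{t-}^{1-\alpha}B_{t-}^H$ convergent and keeps the subsequent $r$-integrals finite.

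For $D_{s+}^{\alpha}f$ I would insert the Weyl representation \eqref{FrDe} and treat its two pieces in turn. The non-singular piece $f(r)/(r-s)^{\alpha}$ is handled by the global Lipschitz property of $\sigma$ (a consequence of $\sigma\in C_b^{2,(2,0)}$), giving $|f(r)|\le\|\nabla\sigma\|_\infty\|X-Y\|_{s,t,\infty}+\|D^L\sigma\|_\infty\W_{2,S,T,\beta}(\mu,\nu)$, where I bound $W_2(\mu_r,\nu_r)\le\W_{2,S,T}(\mu,\nu)\le\W_{2,S,T,\beta}(\mu,\nu)$. Multiplying by the noise bound and evaluating $\int_s^t(t-r)^{\alpha+\beta_1-1}(r-s)^{-\alpha}\,\d r=(t-s)^{\beta_1}\mathcal{B}(1-\alpha,\alpha+\beta_1)$ produces the $\Lambda_1$ term together with the first Beta-contribution to $\Lambda_4$.

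The core of the proof is the singular piece $\alpha\int_s^r(f(r)-f(u))(r-u)^{-1-\alpha}\,\d u$, which demands a bound on the double difference $f(r)-f(u)$. I would split $f$ into a spatial part $\sigma(X_r,\mu_r)-\sigma(Y_r,\mu_r)=g(r)(X_r-Y_r)$, with $g(r):=\int_0^1\nabla\sigma(Y_r+\theta(X_r-Y_r),\mu_r)\,\d\theta$, and a measure part $\sigma(Y_r,\mu_r)-\sigma(Y_r,\nu_r)$. For the spatial part I would expand $g(r)(X_r-Y_r)-g(u)(X_u-Y_u)=g(r)[(X-Y)_r-(X-Y)_u]+[g(r)-g(u)](X-Y)_u$: the first summand is genuinely $\beta$-H\"older and, after the singular integral and the $r$-integration against $\mathcal{B}(\beta-\alpha+1,\alpha+\beta_1)$, yields $\Lambda_3$. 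For the second summand I would estimate $|g(r)-g(u)|$ in two competing ways, a uniform bound $2\|\nabla\sigma\|_\infty$ and a $\beta$-H\"older bound $\lesssim\|\nabla^2\sigma\|_\infty(\|X\|_{s,t,\beta}\vee\|Y\|_{s,t,\beta})(r-u)^\beta$ (with its $\|D^L\nabla\sigma\|_\infty$ measure analogue), and then optimize $\int_s^r\min\{2\|\nabla\sigma\|_\infty,M_1(r-u)^\beta\}(r-u)^{-1-\alpha}\,\d u$ by splitting at the crossover $\tau=(2\|\nabla\sigma\|_\infty/M_1)^{1/\beta}$. This geometric-mean interpolation is precisely the mechanism that generates the exponents $\alpha/\beta$, $(\beta-\alpha)/\beta$ and the constant $2^{3-\alpha/\beta}$ in $\Lambda_2$ (and later $\Lambda_5$), and gives the $(t-s)^{\alpha+\beta_1}$ scaling there.

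I expect the measure part of $f(r)-f(u)$ to be the main obstacle. The relevant double difference $[\sigma(Y_u,\mu_r)-\sigma(Y_u,\nu_r)]-[\sigma(Y_u,\mu_u)-\sigma(Y_u,\nu_u)]$ cannot be controlled by the marginal distance $\W_{2,S,T}$ alone; one must invoke the increment component of the metric \eqref{3.10}, $W_{2,s_1,s_2}^c(\mu,\nu)\le(s_2-s_1)^\beta\W_{2,S,T,\beta}(\mu,\nu)$, whose tailored cost $c(x,y)=|x_1-y_1-(x_2-y_2)|^2$ is designed exactly to measure differences of increments of $\mu$ against $\nu$. Rewriting the $L$-derivative expansion of this double difference (via Lemma \ref{FoLD} and an optimal coupling of $\mu_{u,r},\nu_{u,r}$) in terms of $c$, and pushing the uniform/H\"older interpolation through the $\|D^{L,2}\sigma\|_\infty$, $\|\nabla_1 D^L\sigma\|_\infty$ and $\|\nabla_2 D^L\sigma\|_\infty$ terms, is the delicate bookkeeping producing $\Lambda_5$ and the second Beta-contribution to $\Lambda_4$. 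Finally, since $t-s\le1$, every $(t-s)^{\beta+\beta_1}$ contribution is dominated by $(t-s)^{\beta_1}$, which is how these pieces are absorbed into the $\Lambda_4$ term to reach the stated inequality.
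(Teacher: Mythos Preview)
The paper does not prove this lemma at all: it is simply quoted as \cite[Lemma 3.7]{fan}, so there is no in-paper proof to compare against. Your outline is the standard route and is, in broad strokes, exactly how the result is obtained in \cite{fan}: one inserts the fractional-by-parts representation \eqref{fb}/\eqref{int-si-si}, bounds $|D_{t-}^{1-\alpha}B_{t-}^H(r)|\le C_0\|B^H\|_{s,t,\beta_1}(t-r)^{\alpha+\beta_1-1}$, and then controls $|D_{s+}^\alpha f|$ via the Weyl formula, splitting $f(r)-f(u)$ into a state part and a measure part and using the $\min\{2\|\nabla\sigma\|_\infty,\,M(r-u)^\beta\}$ interpolation (the ``Lemma 5.1 in \cite{fan}'' device that the present paper also invokes in \eqref{eq4.7}) to produce the $\alpha/\beta$ exponents and the $2^{3-\alpha/\beta}$ constant.

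One point to tighten: in the measure double difference you correctly identify that $W_{2,s_1,s_2}^c$ is the object designed for differences of \emph{increments}, but your phrasing ``rewriting the $L$-derivative expansion \ldots\ in terms of $c$'' is where the actual work lies. Concretely, for an optimal coupling $\pi_{u,r}\in\mathscr C_{opt}(\mu_{u,r},\nu_{u,r})$ one writes
\[
\sigma(Y_u,\mu_r)-\sigma(Y_u,\nu_r)-\big(\sigma(Y_u,\mu_u)-\sigma(Y_u,\nu_u)\big)
=\int_0^1\!\!\int \big\langle D^L\sigma(Y_u,\cdot)(\ldots),\, x_1-y_1-(x_2-y_2)\big\rangle\,\pi_{u,r}(\d x,\d y)\,\d\theta + R,
\]
where the remainder $R$ collects the variation of $D^L\sigma$ along the interpolation and is controlled by $\|\nabla_1 D^L\sigma\|_\infty$, $\|\nabla_2 D^L\sigma\|_\infty$ and $\|D^{L,2}\sigma\|_\infty$ times $(r-u)^\beta$-sized quantities coming from $\|\mu\|_{2,S,T,\beta}$ or $\|\nu\|_{2,S,T,\beta}$; the main term is then bounded by $\|D^L\sigma\|_\infty\, W_{2,u,r}^c(\mu,\nu)\le \|D^L\sigma\|_\infty (r-u)^\beta\,\W_{2,S,T,\beta}(\mu,\nu)$, which after the singular integral gives the second Beta term in $\Lambda_4$. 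The interpolation trick applied to $R$ is what yields $\Lambda_5$ with the particular $\wedge/\vee$ placements in the statement. If you spell this step out carefully your sketch becomes a complete proof.
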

	
	\begin{theorem}
		Let $H\in (\frac{\sqrt{5}-1}{2},1)$ and $T > 0$. Assume that Assumption \ref{ass-2.3} holds and $\E e^{{|X_0|}^{\frac{2(1-H)}{H}+\epsilon_0}}< +\infty$
		some constant $\epsilon_0 > 0$. According to Theorem 3.1 in \cite{fan}, we get equation \eqref{eq-1} is well-posedness on $[0, T]$.
	\end{theorem}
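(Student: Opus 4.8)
The plan is to obtain this as a direct application of \cite[Theorem 3.1]{fan}, so the real work is to verify that Assumption \ref{ass-2.3} together with the exponential moment $\E\e^{|X_0|^{2(1-H)/H+\epsilon_0}}<\infty$ furnish exactly the hypotheses required there. Recall the mechanism behind that well-posedness result. One freezes a measure-flow $\mu\in\mathscr P_{2,\beta}(W_{S,T}^d)$ in the coefficients and considers the decoupled equation $\d X_t=b(X_t,\mu_t)\d t+\sigma(X_t,\mu_t)\d B^H_t$. Since $H>\frac12$, this is a Young/pathwise differential equation in the driver $B^H$ (H\"older exponent $\beta\in(\frac12,H)$, with $1-\beta<\alpha<\beta$ as in Lemma \ref{lem-2.5}), so for $\p$-a.e.\ $\omega$ it has a unique solution $X^{\mu}$ of finite $\beta$-H\"older norm. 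The solution map $\Phi:\mu\mapsto\L_{X^{\mu}}$ is then shown to be a contraction on the complete metric space $(\mathscr P_{2,\beta}(W^d_{S,T}),\W_{2,S,T,\beta})$, and the solution of \eqref{eq-1} is its fixed point; concatenating over a partition of $[0,T]$ yields global well-posedness.

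Establishing that $\Phi$ is well-defined is where the two standing hypotheses enter. First I would record the pathwise a priori bound for Young equations driven by $B^H$: the sup- and $\beta$-H\"older norms of $X^{\mu}$ are controlled by the initial datum and an exponential of a power of $\|B^H\|_{S,T,\beta}$, the power being governed by $1/\beta\in(1/H,2)$. To upgrade this into the moment condition \eqref{in-mom}, i.e.\ $\E(\|X\|_{T,\infty}^2+\|X\|_{T,\beta}^2)<\infty$ and the finiteness of the $\|\cdot\|_{2,S,T,\beta}$-norm of $\L_X$, I would invoke Fernique's theorem for the Gaussian process $B^H$ (giving $\E\e^{\eta\|B^H\|_{S,T,\beta}^2}<\infty$ for small $\eta$) and combine it, using the independence of $X_0$ and $B^H$, with the exponential integrability of $X_0$. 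The exponent $\frac{2(1-H)}{H}$ and the threshold $H>\frac{\sqrt5-1}{2}$ are precisely calibrated so that this combination converges: the threshold is the positive root of $H^2+H-1=0$, equivalently $\frac{2(1-H)}{H}<2H$, which is exactly the margin needed for the exponential-of-a-power bound to remain integrable against the Gaussian tail of $\|B^H\|_{S,T,\beta}$.

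For the contraction step I would bound $\W_{2,S,T,\beta}(\Phi(\mu),\Phi(\nu))$ by estimating $\E\|X^{\mu}-X^{\nu}\|_{S,T,\infty}^2$ together with the $\beta$-H\"older Wasserstein part; the drift difference is handled by the Lipschitz bound in Assumption \ref{ass-2.3}, while the more delicate stochastic-integral difference is controlled by Lemma \ref{lem-2.5}, which is exactly the estimate designed to dominate $\int_s^t(\sigma(X_r,\mu_r)-\sigma(Y_r,\nu_r))\d B^H_r$ by the $\W_{2,S,T,\beta}$-distance of the frozen laws plus the pathwise difference $\|X-Y\|$. Choosing the interval $T-S$ small forces the constants $\Lambda_1,\dots,\Lambda_5$ times the corresponding powers of $(t-s)$ below $1$, giving the contraction; existence and uniqueness of the fixed point, and hence of the solution, then follow from the completeness of $(\mathscr P_{2,\beta}(W^d_{S,T}),\W_{2,S,T,\beta})$.

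The main obstacle is the moment bookkeeping of the second paragraph: one must make the pathwise exponential bound, Fernique's estimate, and the prescribed exponential moment of $X_0$ fit together quantitatively, since it is only here that the precise value $\frac{2(1-H)}{H}$ and the Hurst threshold are consumed. Once these integrability facts are in place, the contraction and concatenation are structurally routine, and the conclusion follows by \cite[Theorem 3.1]{fan}.
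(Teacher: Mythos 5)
Your proposal is correct and takes the same route as the paper: the paper's entire ``proof'' of this statement is the direct citation of \cite[Theorem 3.1]{fan}, and what you have written is a faithful reconstruction of how that cited theorem works (decoupled Young equation, contraction of $\mu\mapsto\L_{X^\mu}$ on $(\mathscr P_{2,\beta}(W^d_{S,T}),\W_{2,S,T,\beta})$, and Fernique plus the exponential moment of $X_0$ calibrated by $H^2+H-1>0$). Since the paper itself supplies no argument beyond the citation, your additional detail is a correct elaboration rather than a genuinely different approach.
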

	
	\section{Propagation of chaos}
	In this section, we consider the  interacting particle system
	\begin{equation}\label{eq-4.1}
		X^{i,N}_t=X_S^i+\int_{S}^{t}b(X^{i,N}_s,\mu_s^N)\d s+\int_{S}^{t}\sigma(X^{i,N}_s,\mu_s^N)\d B^{H,i}_s, \quad S\leq t\leq T,
	\end{equation}
	where
	$\mu_t^N=\frac 1N\sum_{i=1}^N\delta_{X_t^{i,N}}$, almost surely for any $S \in [0,T )$ and $i \in {1,...,N}$. The aim of this section is to study the  limit behavior of the system \eqref{eq-4.1} as $N\rightarrow\infty.$  To this end, we introduce the following system of noninteracting particles:
	\begin{equation}\label{eq-4.2}
		X^i_t=X_S^i+\int_{S}^{t}b(X^i_s,\L_{X^i_s})\d s+\int_{S}^{t}\sigma(X^i_s,\L_{X^i_s})\d B^{H,i}_s,\quad S\leq t\leq T.
	\end{equation}
	
	\begin{lemma}\label{lem3.1}
		For two empirical measure $\mu_{t}^{N}=\frac{1}{N}\sum_{j=1}^{N}\delta_{X_t^{j,N}}$ and $\nu_{t}^{N}=\frac{1}{N}\sum_{j=1}^{N}\delta_{{X}^j_{t}}.$ Then,
		\begin{equation}
			\W_{2,S,T,\beta}(\mu^{N},\nu^{N})\leq \sqrt{\frac{1}{N}\sum_{j=1}^N\|X^j-X^{j,N}\|^2_{S,T,\infty}}+\sqrt{\frac{1}{N}\sum_{j=1}^N\|X^j-X^{j,N}\|^2_{S,T,\beta}}.
		\end{equation}	
	\end{lemma}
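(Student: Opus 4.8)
The plan is to exploit the fact that the two families $\{X^{j,N}\}_{j=1}^N$ and $\{X^j\}_{j=1}^N$ are naturally indexed, so that the \emph{index coupling}
\[
\pi:=\frac1N\sum_{j=1}^N\de_{(X^{j,N},X^j)}
\]
is a coupling of $\mu^N$ and $\nu^N$ on the path space $W_{S,T}^d\times W_{S,T}^d$. Since the metric $\W_{2,S,T,\beta}$ in \eqref{3.10} splits as the sum of the uniform part $\W_{2,S,T}$ and the $\beta$-weighted supremum of the increment cost $W_{2,s_1,s_2}^c$, I would bound the two pieces separately, testing each against this single coupling.

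For the uniform part, $\W_{2,S,T}(\mu^N,\nu^N)$ is by definition the infimum over \emph{all} couplings of $\big(\int\|\gamma_1-\gamma_2\|_{S,T,\infty}^2\,\pi(\d\gamma_1,\d\gamma_2)\big)^{1/2}$; evaluating this functional on the index coupling $\pi$ gives at once
\[
\W_{2,S,T}(\mu^N,\nu^N)\le\Big(\frac1N\sum_{j=1}^N\|X^{j,N}-X^j\|_{S,T,\infty}^2\Big)^{1/2},
\]
which is precisely the first term on the right-hand side. No optimality subtlety enters here, because the infimum runs over all couplings.

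For the increment part, fix $S\le s_1<s_2\le T$ and write $Z^j:=X^j-X^{j,N}$. The marginals $\mu^N_{s_1,s_2}$ and $\nu^N_{s_1,s_2}$ are the uniform empirical measures on the pairs $(X^{j,N}_{s_1},X^{j,N}_{s_2})$ and $(X^j_{s_1},X^j_{s_2})$, respectively, and the index coupling matches the $j$-th atoms. On this matching the cost $c$ from \eqref{AddHO2} enjoys the crucial cancellation
\[
c\big((X^{j,N}_{s_1},X^{j,N}_{s_2}),(X^j_{s_1},X^j_{s_2})\big)=\big|Z^j_{s_1}-Z^j_{s_2}\big|^2\le (s_2-s_1)^{2\beta}\,\|Z^j\|_{S,T,\beta}^2 ,
\]
so that averaging over $j$, taking square roots, dividing by $(s_2-s_1)^\beta$ and taking the supremum over $s_1<s_2$ would produce the second term $\big(\frac1N\sum_j\|X^j-X^{j,N}\|_{S,T,\beta}^2\big)^{1/2}$. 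Adding the two bounds according to \eqref{3.10} then gives the claim.

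The main obstacle is the admissibility of this last computation: $W_{2,s_1,s_2}^c$ is an infimum not over all couplings of $(\mu^N_{s_1,s_2},\nu^N_{s_1,s_2})$ but only over those that are \emph{optimal} for the max-metric distance $\W_2$, i.e. over $\mathscr C_{opt}(\mu^N_{s_1,s_2},\nu^N_{s_1,s_2})$, and the index matching need not itself be $\W_2$-optimal. Thus the cancellation above only controls $\sqrt{\pi_{s_1,s_2}(c)}$ for the index coupling, and one must certify that this coupling may legitimately be taken inside the infimum defining $W_{2,s_1,s_2}^c$. I would resolve this either by arguing that, for these equally weighted empirical marginals carried by the same noises and initial data, the index coupling can be realized as a member of $\mathscr C_{opt}$, or, failing a direct verification, by invoking the coupling estimate for the Fan--Zhang metric in \cite[Theorem 3.4]{fan}, which permits bounding $W_{2,s_1,s_2}^c$ through a prescribed coupling. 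Once this point is settled, the remainder is a routine application of the definitions \eqref{AddHO1}, \eqref{AddHO2} and \eqref{3.10}.
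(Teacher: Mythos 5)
Your proposal follows essentially the same route as the paper: the paper bounds $\W_{2,S,T}(\mu^N,\nu^N)$ by plugging the index coupling $\pi_1=\frac1N\sum_j\de_{(X^j,X^{j,N})}$ into \eqref{AddHO1}, and bounds $W^c_{2,s_1,s_2}(\mu^N,\nu^N)$ by plugging the paired-time index coupling $\pi_2=\frac1N\sum_j\de_{((X^j_{s_1},X^j_{s_2}),(X^{j,N}_{s_1},X^{j,N}_{s_2}))}$ into \eqref{AddHO2}, extracting the factor $(s_2-s_1)^\beta$ from the H\"older seminorm exactly as you do. The one obstacle you flag --- that $W^c_{2,s_1,s_2}$ is an infimum only over $\mathscr C_{opt}$, so the index coupling must be certified admissible --- is not actually resolved in the paper either: it simply asserts $\pi_2\in\mathscr C_{opt}(\mu^N_{s_1,s_2},\nu^N_{s_1,s_2})$ without justification, so your explicit identification of this gap is, if anything, more careful than the source.
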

	\begin{proof}
		 By substituting coupling $\pi_1=\frac{1}{N}\sum_{j=1}^N\delta_{(X^j,X^{j,N})}\in \mathscr C (\mu^N,\nu^N)$ into \eqref{AddHO1}, we obtain
		 \begin{align*}
		 	\W_{2,S,T}(\mu^N,\nu^N)&\leq \Big(\int_{W^d_{S,T} \times W^d_{S,T}}\|\gamma_1-\gamma_2\|^2_{S,T,\infty}\frac{1}{N}\sum_{j=1}^N\delta_{(X^j,X^{j,N})}(\d \gamma_1,\d \gamma_2)\Big)^{\frac{1}{2}}\\
		 	&\leq \Big(\frac{1}{N}\sum_{j=1}^N\int_{W^d_{S,T} \times W^d_{S,T}}\|\gamma_1-\gamma_2\|^2_{S,T,\infty}\delta_{(X^j,X^{j,N})}(\d \gamma_1,\d \gamma_2)\Big)^{\frac{1}{2}}\\
		 	&\leq \Big(\frac{1}{N}\sum_{j=1}^N\|X^j-X^{j,N}\|^2_{S,T,\infty}\Big)^{\frac{1}{2}}.
		 \end{align*}
		 Similarly, choosing the coupling $\pi_2=\frac{1}{N}\sum_{j=1}^N\delta_{((X^j_{s_1},X^j_{s_2}),(X^{j,N}_{s_1},X^{j,N}_{s_2}))}\in \mathscr{C}_{opt}(\mu^N_{s_1,s_2},\nu^N_{s_1,s_2})$ and substituting it into \eqref{AddHO2} yields that
		 \begin{align*}
		 	W_{2,s_1,s_2}^c(\mu^N,\nu^N)&\leq \Big( \int_{\R^{2d}\times \R^{2d}}|x_1-y_1-(x_2-y_2)|^2\frac{1}{N}\sum_{j=1}^N\delta_{((X^j_{s_1},X^j_{s_2}),(X^{j,N}_{s_1},X^{j,N}_{s_2}))}(\d x, \d y)\Big)^{\frac{1}{2}}\\
		 	&\leq \Big( \frac{1}{N}\sum_{j=1}^N |X^j_{s_1}-X^{j,N}_{s_1}-(X^j_{s_2}-X^{j,N}_{s_2})|^2\Big)^{\frac{1}{2}}\\
		 	&\leq \Big( \frac{1}{N}\sum_{j=1}^N \|X^j-X^{j,N}\|^2_{S,T,\beta}\Big)^{\frac{1}{2}}(s_2-s_1)^{\beta}.
		 \end{align*}
		 It follows directly from the definition of \eqref{3.10} that
		 \begin{align*}
		 		\W_{2,T,\beta}(\mu^{N},\nu^{N})\leq \sqrt{\frac{1}{N}\sum_{j=1}^N\|X^j-X^{j,N}\|^2_{S,T,\infty}}+\sqrt{\frac{1}{N}\sum_{j=1}^N\|X^j-X^{j,N}\|^2_{S,T,\beta}}.
		 \end{align*}
	\end{proof}
	
	\begin{lemma}\label{lem-3.2}
		Let $\frac{1}{2}<\beta<H$, Assumption \ref{ass-2.3} hold, $X^{i,N}_S$ and $X^i_S$ are $\F_S$-measurable random variable, then for any $p\geq 1$,
		\begin{equation}
			\begin{aligned}
				&\Big[\E\Big(\|X^i\|^p_{S,T,\infty}+\|X^i\|^p_{S,T,\beta}+\|X^{i,N}\|^p_{S,T,\infty}+\|X^{i,N}\|^p_{S,T,\beta}\Big)\Big]< \infty,
			\end{aligned}
		\end{equation}
		where $S\in [0,T)$.
	\end{lemma}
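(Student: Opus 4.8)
The plan is to first establish an $\omega$-wise a priori bound for the $\beta$-H\"older and supremum norms of a solution, and then to integrate that bound against the Gaussian tail of $\|B^H\|_{S,T,\beta_1}$. Throughout I fix $\beta_1\in(\beta,H)$ and $1-\beta<\alpha<\beta$ as in Lemma \ref{lem-2.5}, and treat a generic equation $Y_t=Y_S+\int_S^tb(Y_r,\theta_r)\d r+\int_S^t\sigma(Y_r,\theta_r)\d B^H_r$ whose measure argument satisfies $M:=\|\theta\|_{2,S,T,\beta}<\infty$. On a subinterval $[s,t]$ I would control the drift by the linear growth of $b$ (via $W_2(\theta_r,\de_{\bf 0})\le M$), and the stochastic term by applying Lemma \ref{lem-2.5} with the second solution taken to be the constant path $0$ and $\nu=\de_{\bf 0}$, after peeling off the constant integrand $\sigma(0,\de_{\bf 0})(B^H_t-B^H_s)$. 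Since $\sigma\in C_b^{2,(2,0)}$ keeps every $\Lambda_i$ finite and $\W_{2,S,T,\beta}(\theta,\de_{\bf 0})=M$, this produces on each $[s,t]\subseteq[S,T]$ (with $t-s\le1$) the self-referential inequality
\begin{equation*}
\|Y\|_{s,t,\beta}\le C\big(1+\|B^H\|_{s,t,\beta_1}\big)(t-s)^{\beta_1-\beta}\big(1+\|Y\|_{s,t,\infty}+M\big)+C\|B^H\|_{s,t,\beta_1}(t-s)^{\beta_1}\|Y\|_{s,t,\beta}.
\end{equation*}

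The heart of the argument is then a greedy partition $S=\tau_0<\tau_1<\cdots<\tau_m=T$ chosen so that $C\|B^H\|_{\tau_{k-1},\tau_k,\beta_1}(\tau_k-\tau_{k-1})^{\beta_1}\le\tfrac12$ on every block, which lets the last term above be absorbed into the left-hand side. A standard superadditivity argument for $(s,t)\mapsto\|B^H\|_{s,t,\beta_1}^{1/\beta_1}(t-s)$ bounds the number of blocks by $m\le C(1+\|B^H\|_{S,T,\beta_1}^{1/\beta_1})$. Combining the absorbed local inequality with $\|Y\|_{\tau_{k-1},\tau_k,\infty}\le|Y_{\tau_{k-1}}|+(\tau_k-\tau_{k-1})^\beta\|Y\|_{\tau_{k-1},\tau_k,\beta}$ and concatenating across the $m$ blocks produces geometric growth, hence the pathwise bound
\begin{equation*}
\|Y\|_{S,T,\infty}+\|Y\|_{S,T,\beta}\le C\big(1+|Y_S|+M\big)\exp\!\Big(C\big(1+\|B^H\|_{S,T,\beta_1}\big)^{1/\beta_1}\Big).
\end{equation*}

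For the non-interacting system \eqref{eq-4.2} the measure argument $\theta_r=\L_{X^i_r}$ is deterministic, and the integrability \eqref{in-mom} built into the definition of solution gives $M=\|\L_{X^i}\|_{2,S,T,\beta}<\infty$. Taking $p$-th moments in the pathwise bound, using that $X^i_S$ is independent of $B^{H,i}$, that $X^i_S$ has all polynomial moments (guaranteed by the standing exponential-moment hypothesis on the initial data), and Fernique's theorem for the Gaussian seminorm $\|B^{H,i}\|_{S,T,\beta_1}$, we obtain $\E(\|X^i\|_{S,T,\infty}^p+\|X^i\|_{S,T,\beta}^p)<\infty$; here the inequality $\beta_1>\tfrac12$ is exactly what gives $1/\beta_1<2$, so that $\exp(Cp(1+\|B^{H,i}\|_{S,T,\beta_1})^{1/\beta_1})$ is integrable.

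The interacting system is where the main difficulty lies, since $\theta_r=\mu_r^N=\tfrac1N\sum_j\delta_{X_r^{j,N}}$ is random and couples all particles: by \eqref{AddDef} one only has $M=\|\mu^N\|_{2,S,T,\beta}\le\sqrt{\tfrac1N\sum_j\|X^{j,N}\|_{S,T,\infty}^2}+\sqrt{\tfrac1N\sum_j\|X^{j,N}\|_{S,T,\beta}^2}$, and moreover the greedy partition is governed by a different driver $B^{H,i}$ for each $i$. I would resolve this by regarding the whole vector $(X^{1,N},\dots,X^{N,N})$ as the solution of a single $Nd$-dimensional equation driven by $(B^{H,1},\dots,B^{H,N})$: the empirical-measure map $x\mapsto\tfrac1N\sum_j\delta_{x_j}$ is $1$-Lipschitz into $(\pb_2(\R^d),W_2)$, so the drift stays Lipschitz and the diffusion is block-diagonal with entries in $C_b^{2,(2,0)}$; the pathwise estimate in dimension $Nd$ together with Fernique then yields finiteness of all moments of $\|X^{i,N}\|_{S,T,\infty}+\|X^{i,N}\|_{S,T,\beta}$. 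The step I expect to demand the most care is upgrading this to a bound \emph{uniform} in $N$ (as required by the propagation-of-chaos estimates that follow): this is where exchangeability, which makes $\E\|X^{i,N}\|^p$ independent of $i$, combines with the boundedness of $\sigma$ and its derivatives so that $\mu^N$ enters only linearly, through $\E(\tfrac1N\sum_j\|X^{j,N}\|_{S,T,\infty}^2)=\E\|X^{1,N}\|_{S,T,\infty}^2$, and the inequality closes with constants independent of $N$.
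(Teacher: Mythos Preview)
Your approach is sound and proves the lemma as stated, but it differs from the paper's in two places worth flagging.

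First, the paper does not route the stochastic integral through Lemma~\ref{lem-2.5}. It estimates $D^{\alpha}_{s+}\sigma(X^{i,N}_{\cdot},\mu^N_{\cdot})(r)$ directly from the boundedness of $\sigma$ and of $\nabla\sigma,\,D^L\sigma$ (invoking the interpolation Lemma~5.1 of \cite{fan} to turn the $\wedge(2M)$ truncation into an $\alpha/\beta$-power), and then linearises via Young's inequality. Your use of Lemma~\ref{lem-2.5} with the reference path $0$ is legitimate, but note that $\Lambda_2$ then carries a factor $\|Y\|_{s,t,\beta}^{\alpha/\beta}$, so the clean linear self-referential inequality you display is not what Lemma~\ref{lem-2.5} literally gives; you still need the same Young step the paper performs. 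The paper's direct route is shorter here because it never produces the $\Lambda_2,\Lambda_5$ cross-terms in the first place.

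Second, and more substantively, for the interacting system the paper does \emph{not} lift to $Nd$ dimensions. Instead it takes expectations inside the local H\"older estimate and uses exchangeability immediately to collapse
\[
\E\Big[\Big(\tfrac1N\sum_{j}\|X^{j,N}\|_{s,t,\beta}^{2}\Big)^{1/2}\Big]\le\big(\E\|X^{i,N}\|_{s,t,\beta}^{2}\big)^{1/2},
\]
so that the recursion is already for a single particle's moment and carries constants independent of $N$ from the outset. Your pathwise bound in $Nd$ dimensions has its exponential governed by $\max_{i\le N}\|B^{H,i}\|_{S,T,\beta_1}$, whose moments grow with $N$; this is fine for the finiteness asserted in the lemma, but to obtain the uniform-in-$N$ bound you correctly anticipate needing downstream you would end up taking expectations at the local level and invoking exchangeability there---which is precisely the paper's mechanism. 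In short: your route buys a cleaner pathwise statement for the non-interacting system; the paper's moment-first route buys the uniform-in-$N$ constants for the interacting one without an extra pass. Your greedy partition and the paper's uniform mesh $\Delta_1$ are equivalent devices, both producing a block count of order $\|B^H\|_{S,T,\beta_1}^{1/\beta_1}$.
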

	\begin{proof}
		For any $S\leq s < t \leq T$,
		\begin{equation}
			X_t^{i,N}-X_s^{i,N}=\int_{s}^{t}b(X_r^{i,N},\mu_r^N)\d r+\int_{s}^{t}\sigma(X_r^{i,N},\mu_r^N)\d B^{H,i}_r.
		\end{equation}
		Applying the fractional integration by parts formula \eqref{fb} with $\alpha\in(1-\beta,\beta)$, we have
		\begin{equation}\label{eq4.6}
			\int_{s}^{t}\sigma(X_r^{i,N},\mu_r^N)\d B^{H,i}_r=(-1)^{\alpha}\int_{s}^{t}D^{\alpha}_{s+}\sigma(X_{.}^{i,N},\mu_{.}^N)(r)D^{1-\alpha}_{t-}B_{t-}^{H,i}(r)\d r.
		\end{equation}
		Based on Assumption \ref{ass-2.3} and lemma 5.1 in \cite{fan}, we obtain
		\begin{equation}\label{eq4.7}
			\begin{aligned}
				&|D^{\alpha}_{s+}\sigma(X_{.}^{i,N},\mu_{.}^N)(r)|\\
				&= \frac{1}{\Gamma(1-\alpha)}\Big|\frac{\sigma(X^{i,N}_r,\mu_r^N)}{(r-s)^{\alpha}}+\alpha\int_{s}^{r}\frac{\sigma(X^{i,N}_r,\mu_r^N)-\sigma(X^{i,N}_u,\mu_u^N)}{(r-u)^{\alpha+1}}\d u\Big|\\
				&\leq \frac{1}{\Gamma(1-\alpha)}\Big|\frac{M}{(r-s)^{\alpha}}+\alpha\int_{s}^{r}
				\frac{\big(\|\nabla\sigma\|_{\infty}|X^{i,N}_r-X^{i,N}_u|+\|D^L\sigma\|_{\infty}W_2(\mu_r^N,\mu_u^N)\big)\wedge (2M)}{(r-u)^{\alpha+1}}\d u\big|\\
				&\leq \frac{1}{\Gamma(1-\alpha)}\Big|\frac{M}{(r-s)^{\alpha}}\\
				&\quad+\alpha\int_{s}^{r}
				\frac{\big(\|\nabla\sigma\|_{\infty}|X^{i,N}_r-X^{i,N}_u|+\|D^L\sigma\|_{\infty}(\frac{1}{N}\sum\limits_{j=1}^{N}|X^{j,N}_r-X^{j,N}_u|^2)^{\frac{1}{2}}\big)\wedge (2M)}{(r-u)^{\alpha+1}}\d u\Big|\\
				&\leq \frac{1}{\Gamma(1-\alpha)}\Big|\frac{M}{(r-s)^{\alpha}}\\
				&\quad+\alpha\int_{s}^{r}
				\frac{\big(\|\nabla\sigma\|_{\infty}\|X^{i,N}\|_{u,r,\beta}(r-u)^{\beta}+\|D^L\sigma\|_{\infty}(\frac{1}{N}\sum_{j=1}^N\|X^{j,N}\|^2_{u,r,\beta})^{\frac{1}{2}}(r-u)^{\beta}\big)\wedge (2M)}{(r-u)^{\alpha+1}}\d u\Big|\\
				&\leq \frac{1}{\Gamma(1-\alpha)}\Big|\frac{M}{(r-s)^{\alpha}}+\alpha\int_{s}^{r}
				\frac{\Big(\|\nabla\sigma\|_{\infty}\|X^{i,N}\|_{u,r,\beta}(r-u)^{\beta}\Big)\wedge (2M)}{(r-u)^{\alpha+1}}\d u\\
				&\quad+\alpha\int_{s}^{t}\frac{\big(\|D^L\sigma\|_{\infty}(\frac{1}{N}\sum_{j=1}^N\|X^{j,N}\|^2_{u,r,\beta})^{\frac{1}{2}}(r-u)^{\beta}\big)\wedge (2M)}{(r-u)^{\alpha+1}}\d u\Big|\\
				&\leq \frac{1}{\Gamma(1-\alpha)}\Big[M(r-s)^{-\alpha}+\frac{2^{3-\frac{\alpha}{\beta}}\beta}{\beta-\alpha}M^{\frac{\beta-\alpha}{\beta}}(\|\nabla\sigma\|_{\infty}\|X^{i,N}\|_{s,r,\beta})^{\frac{\alpha}{\beta}}\\
				&\quad+\frac{2^{3-\frac{\alpha}{\beta}}\beta}{\beta-\alpha}M^{\frac{\beta-\alpha}{\beta}}\bigg( \|D^L\sigma\|_{\infty}(\frac{1}{N}\sum_{j=1}^N\|X^{j,N}\|^2_{s,r,\beta})^{\frac{1}{2}}\bigg)^{\frac{\alpha}{\beta}}
				\Big].
			\end{aligned}
		\end{equation}
		In addition
		\begin{equation}\label{eq4.8}
			\begin{aligned}
				\left|D_{t-}^{1-\alpha} B_{t-}^{H,i}(r)\right| & =\frac{1}{\Gamma(\alpha)}\left|\frac{B_{r}^{H,i}-B_{t}^{H,i}}{(t-r)^{1-\alpha}}+(1-\alpha) \int_{r}^{t} \frac{B_{r}^{H,i}-B_{u}^{H,i}}{(u-r)^{2-\alpha}} \, \d u\right| \\
				& \leq \frac{\beta}{\Gamma(\alpha)(\alpha+\beta-1)}\left\| B^{H,i}\right\| _{S, T, \beta}(t-r)^{\alpha+\beta-1} .
			\end{aligned}
		\end{equation}
		Plugging \eqref{eq4.7} and \eqref{eq4.8} into \eqref{eq4.6} yields
		\begin{align*}
			&\Big|\int_{s}^{t}\sigma(X_r^{i,N},\mu_r^N)\d B^{H,i}_r\Big|\\
			&\leq C\|B^{H,i}\|_{S,T,\beta}\Big[ \int_{s}^{t}(r-s)^{\alpha}(t-r)^{\alpha+\beta-1}\d r\\
			&\quad\quad\quad+\int_{s}^{t}\Bigg((t-r)^{\alpha+\beta-1}\|X^{i,N}\|_{s,r,\beta}^{\frac{\alpha}{\beta}}+(t-r)^{\alpha+\beta-1}\bigg(\frac{1}{N}\sum_{j=1}^N\|X^{j,N}\|^2_{s,r,\beta}\bigg)^{\frac{\alpha}{2\beta}}\Bigg)\d r\Big]\\
			&\leq C\|B^{H,i}\|_{S,T,\beta}\Big[ (t-s)^{\beta}+(t-s)^{\alpha+\beta}\|X^{i,N}\|_{s,t,\beta}^{\frac{\alpha}{\beta}}+(t-s)^{\alpha+\beta}\bigg(\frac{1}{N}\sum_{j=1}^N\|X^{j,N}\|^2_{s,t,\beta}\bigg)^{\frac{\alpha}{2\beta}}  \Big].
		\end{align*}
		With respect to the drift term, due to the linear growth property of
		$b$, it is straightforward to observe that
		\begin{equation}
			\begin{aligned}
				\Big| \int_{s}^{t}b(X^{i,N}_r,\mu_r^N)\d r\Big|&\leq \int_{s}^{t} K(1+|X^{i,N}_r|+W_2(\mu_r^N,\delta_0))\d r\\
				&\leq K(t-s)+K\int_{s}^{t}|X^{i,N}_r|\d r+\int_{s}^{t}\bigg(\frac{1}{N}\sum_{j=1}^N|X^{j,N}_r|^2\bigg)^{\frac{1}{2}}\d r.
			\end{aligned}
		\end{equation}
		Therefore, we have
		\begin{equation}
			\begin{aligned}
				&|X^{i,N}_t-X^{i,N}_s|\\
				&\leq \Big| \int_{s}^{t}b(X^{i,N}_r,\mu_r^N)\d r\Big|+\Big|\int_{s}^{t}\sigma(X_r^{i,N},\mu_r^N)\d B^{H,i}_r\Big|\\
				&\leq K(t-s)+K\int_{s}^{t}|X^{i,N}_r|\d r+\int_{s}^{t}\bigg(\frac{1}{N}\sum_{j=1}^N|X^{j,N}_r|^2\bigg)^{\frac{1}{2}}\d r\\
				&\quad+C\|B^{H,i}\|_{S,T,\beta}\Big[ (t-s)^{\beta}+(t-s)^{\alpha+\beta}\|X^{i,N}\|_{s,t,\beta}^{\frac{\alpha}{\beta}}+(t-s)^{\alpha+\beta}\bigg(\frac{1}{N}\sum_{j=1}^N\|X^{j,N}\|^2_{s,t,\beta}\bigg)^{\frac{\alpha}{2\beta}}  \Big]\\
				&\leq K(t-s)+K(t-s)^{\beta}\Big(\int_{s}^{t}|X^{i,N}_r|^{\frac{1}{1-\beta}}\d r\Big)^{1-\beta}+K(t-s)^{\beta}\Bigg(\int_{s}^{t}\bigg(\frac{1}{N}\sum_{j=1}^N|X^{j,N}_r|^2\d r\bigg)^{\frac{1}{2(1-\beta)}}\Bigg)^{1-\beta}\\
				&\quad+C\|B^{H,i}\|_{S,T,\beta}\Big[ (t-s)^{\beta}+(t-s)^{\alpha+\beta}\|X^{i,N}\|_{s,t,\beta}^{\frac{\alpha}{\beta}}+(t-s)^{\alpha+\beta}\bigg(\frac{1}{N}\sum_{j=1}^N\|X^{j,N}\|^2_{s,t,\beta}\bigg)^{\frac{\alpha}{2\beta}}  \Big].
			\end{aligned}
		\end{equation}
		Consequently, it follows that
		\begin{equation}
			\begin{aligned}
				&\frac{|X^{i,N}_t-X^{i,N}_s|}{(t-s)^\beta}\\
				&\leq K(t-s)^{1-\beta}+K\Big(\int_{s}^{t}|X^{i,N}_r|^{\frac{1}{1-\beta}}\d r\Big)^{1-\beta}+K\Bigg(\int_{s}^{t}\bigg(\frac{1}{N}\sum_{j=1}^N|X^{j,N}_r|^2\d r\bigg)^{\frac{1}{2(1-\beta)}}\Bigg)^{1-\beta}\\
				&\quad+C\|B^{H,i}\|_{S,T,\beta}\Big[ 1+(t-s)^{\alpha}\|X^{i,N}\|_{s,t,\beta}^{\frac{\alpha}{\beta}}+(t-s)^{\alpha}\bigg(\frac{1}{N}\sum_{j=1}^N\|X^{j,N}\|^2_{s,t,\beta}\bigg)^{\frac{\alpha}{2\beta}}  \Big].
			\end{aligned}
		\end{equation}
		Young inequality point out that
		\begin{align*}
			(t-s)^{\alpha}\|X^{i,N}\|_{s,t,\beta}^{\frac{\alpha}{\beta}}\leq \frac{\beta-\alpha}{\beta}+\frac{\alpha}{\beta}(t-s)^{\beta}\|X^{i,N}\|_{s,t,\beta},
		\end{align*}
		and
		\begin{align*}
			(t-s)^{\alpha}(\frac{1}{N}\sum_{j=1}^N\|X^{j,N}\|^2_{s,t,\beta}\big)^{\frac{\alpha}{2\beta}}\leq \frac{\beta-\alpha}{\beta}+\frac{\alpha}{\beta}(t-s)^{\beta}\bigg(\frac{1}{N}\sum_{j=1}^N\|X^{j,N}\|^2_{s,t,\beta}\bigg)^{\frac{1}{2}}.
		\end{align*}
		Thus, we obtain that
		\begin{equation}\label{eq-4.13}
			\begin{aligned}
				&\|X^{i,N}\|_{s,t,\beta}\\
				&\leq K\big(1+|X^{i,N}_s|+\bigg(\frac{1}{N}\sum_{j=1}^N|X^{j,N}_s|^2\bigg)^{\frac{1}{2}}\big)(t-s)^{1-\beta}+K\bigg(\int_{s}^{t}|X^{i,N}_r-X^{i,N}_s|^{\frac{1}{1-\beta}}\d r\bigg)^{1-\beta}\\
				&\quad+K\Big(\int_{s}^{t}(\frac{1}{N}\sum_{j=1}^N|X^{j,N}_r-X^{i,N}_s|^2)^{\frac{1}{2(1-\beta)}}\d r\Big)^{1-\beta}\\
				&\quad+C\|B^{H,i}\|_{S,T,\beta}\Big[ 1+(t-s)^{\beta}\|X^{i,N}\|_{s,t,\beta}+(t-s)^{\beta}\bigg(\frac{1}{N}\sum_{j=1}^N\|X^{j,N}\|^2_{s,t,\beta}\bigg)^{\frac{1}{2}}\Big]\\
				&\leq K\big(1+|X^{i,N}_s|+(\frac{1}{N}\sum_{j=1}^N|X^{j,N}_s|^2)^{\frac{1}{2}}\big)(t-s)^{1-\beta}+K\bigg(\int_{s}^{t}(r-s)^{\frac{\beta}{1-\beta}}\d r\bigg)^{1-\beta}\|X^{i,N}\|_{s,t,\beta}\\
				&\quad+K\Big(\int_{s}^{t}(r-s)^{\frac{2\beta}{2(1-\beta)}}\d r\Big)^{1-\beta}\bigg(\frac{1}{N}\sum_{j=1}^N\|X^{j,N}\|^2_{s,t,\beta}\bigg)^{\frac{1}{2}}\\
				&\quad+C\|B^{H,i}\|_{S,T,\beta}\Big[ 1+(t-s)^{\beta}\|X^{i,N}\|_{s,t,\beta}+(t-s)^{\beta}\bigg(\frac{1}{N}\sum_{j=1}^N\|X^{j,N}\|^2_{s,t,\beta}\bigg)^{\frac{1}{2}}\Big]\\
				&\leq C\Big(\|B^{H,i}\|_{S,T,\beta}+\bigg(1+|X^{i,N}_s|+\bigg(\frac{1}{N}\sum_{j=1}^N|X^{j,N}_s|^2\bigg)^{\frac{1}{2}}\bigg)(t-s)^{1-\beta}\Big)\\
				&\quad+C(t-s)^{\beta}\Big(\|B^{H,i}\|_{S,T,\beta}+(t-s)^{1-\beta}\Big)\|X^{i,N}\|_{s,t,\beta}\\
				&\quad+C(t-s)^{\beta}\Big(\|B^{H,i}\|_{S,T,\beta}+(t-s)^{1-\beta}\Big)\bigg(\frac{1}{N}\sum_{j=1}^N\|X^{j,N}\|^2_{s,t,\beta}\bigg)^{\frac{1}{2}}.
			\end{aligned}
		\end{equation}
		Note that since all $\{X^{j,N}_s\},$ $j=1,\cdot\cdot\cdot,N$ are independent and identically distributed, we obtain
		\begin{align*}
			\E\bigg(\bigg(\frac{1}{N}\sum_{j=1}^N|X^{j,N}_s|^2\bigg)^{\frac{1}{2}}\bigg)\leq \bigg(\frac{1}{N}\sum_{j=1}^N\E|X^{j,N}_s|^2\bigg)^{\frac{1}{2}}=(\E|X^{i,N}_s|^2)^{\frac{1}{2}}
		\end{align*}
		and
		\begin{equation}
			\begin{aligned}
				\mathbb{E}\bigg(\bigg(\frac{1}{N}\sum_{j=1}^{N}\|X^{j,N}\|_{s,t,\beta}^{2}\bigg)^{\frac{1}{2}}\bigg)& \leq\left(\frac{1}{N}\sum_{j=1}^{N}\E\|X^{j,N}\|_{s,t,\beta}^{2}\right)^{\frac{1}{2}} =\big(\mathbb{E}\|X^{i,N}\|_{s,t,\beta}^{2}\big)^{\frac{1}{2}}.
			\end{aligned}
		\end{equation}
		Taking the expectation of both sides of \eqref{eq-4.13},  we arrive at
		\begin{equation}\label{eq-3.19}
			\begin{aligned}
				\E\|X^{i,N}\|^p_{s,t,\beta}&\leq C\Big(\|B^{H,i}\|^p_{S,T,\beta}+\big(1+\E|X^{i,N}_s|^p+\E|X^{i,N}_s|^p\big)(t-s)^{p(1-\beta)}\Big)\\
				&\quad+C(t-s)^{p\beta}\Big(\|B^{H,i}\|^p_{S,T,\beta}+(t-s)^{p(1-\beta)}\Big)\E\|X^{i,N}\|^p_{s,t,\beta}\\
				&\quad+C(t-s)^{p\beta}\Big(\|B^{H,i}\|^p_{S,T,\beta}+(t-s)^{p(1-\beta)}\Big)\E\|X^{i,N}\|^p_{s,t,\beta}\\
				&\leq C\Big[\Big(\|B^{H,i}\|^p_{S,T,\beta}+\big(1+\E|X^{i,N}_s|^p\big)(t-s)^{p(1-\beta)}\Big)\\
				&\quad+(t-s)^{p\beta}\Big(\|B^{H,i}\|^p_{S,T,\beta}+(t-s)^{p(1-\beta)}\Big)\E\|X^{i,N}\|^p_{s,t,\beta}\Big].\\
			\end{aligned}
		\end{equation}	
		Without of lost generality, we assume that $C \geq 1$ in \eqref{eq-3.19}, and let
		\begin{equation}\label{eq-4.15}
			\Delta_1 = \left( \frac{1 \wedge (T-S)^{p\beta}}{6 C \| B^{H,i} \|^p _{S, T, \beta}} \right)^{\frac{1}{p\beta}} \wedge \left(\frac{1\wedge(T-S)^{p\beta}}{6C}\right)^\frac{1}{p}{},
		\end{equation}
		Taking $t = s + \Delta_1$, we get
		\begin{equation}
			\begin{aligned}
				\E\|X^{i,N}\|^p_{s,s + \Delta_1,\beta}
				&\leq\frac{C\Big(\|B^{H,i}\|^p_{S,T,\beta}+\big(1+\E|X^{i,N}_s|^p\big)\Delta_1^{p(1-\beta)}\Big) }{1-C\Delta_1^{p\beta}\Big(\|B^{H,i}\|^p_{S,T,\beta}+\Delta_1^{p(1-\beta)}\Big)}\\
				&\leq\frac{C\Big(\|B^{H,i}\|^p_{S,T,\beta}+\big(1+\E|X^{i,N}_s|^p\big)\Delta_1^{p(1-\beta)}\Big) }{1-\frac{1}{6}-\frac{1}{6}}\\
				&\leq 2C\Big(\|B^{H,i}\|^p_{S,T,\beta}+\big(1+\E|X^{i,N}_s|^p\big)\Delta_1^{p(1-\beta)}\Big).
			\end{aligned}
		\end{equation}
		Set $\eta_1:=2C\Delta_1^p$, for every $k\in N$,
		\begin{equation}
			\Pi_k:=\E\|X^{i,N}\|^p_{S+k\Delta_1,S+(k+1)\Delta_1\wedge T,\beta}\Delta_1^{p\beta}.
		\end{equation}
		So,we have
		\begin{equation}\label{eq-4.18}
			\begin{aligned}
				\Pi_k&\leq 2C\Big(\|B^{H,i}\|^p_{S,T,\beta}\Delta_1^{p\beta}+\Delta_1^p+\Delta_1^{p}\E|X^{i,N}_{S+k\Delta_1}|^p\Big)\\
				&\leq 2C\Big(\frac{1\wedge (T-S)^{p\beta}}{6C}+\frac{1\wedge (T-S)^{p\beta}}{6C}\Big)+\eta_1\E|X^{i,N}_{S+k\Delta_1}|^p\\
				&\leq 1\wedge (T-S)^{p\beta}+\eta_1\E|X^{i,N}_{S+k\Delta_1}|^p,
			\end{aligned}
		\end{equation}
		which leads to
		\begin{equation}\label{eq-4.19}
			\begin{aligned}
				\E|X^{i,N}_{S+k\Delta_1}|^p&\leq 2^{p-1}\E|X^{i,N}_{S+k\Delta_1}-X^{i,N}_{S+(k-1)\Delta_1}|^p+2^{p-1}\E|X^{i,N}_{S+(k-1)\Delta_1}|^p\\
				&\leq 2^{p-1}\Pi_{k-1}+2^{p-1}\E|X^{i,N}_{S+(k-1)\Delta_1}|^p\\
				&\leq 2^{p-1}\big(1\wedge (T-S)^{p\beta} \big)+2^{p-1}(1+\eta_1)\E|X^{i,N}_{S+(k-1)\Delta_1}|^p.
			\end{aligned}
		\end{equation}
		We can see that
		\begin{equation}
			\begin{aligned}
				\E|X^{i,N}_{S+k\Delta_1}|^p&\leq \big(2^{p-1}(1+\eta_1)\big)^k\E|X^{i,N}_S|^p+2^{p-1}\Big(\sum_{i=0}^{k-1}[2^{p-1}(1+\eta_1)]^i\Big)\big(1\wedge (T-S)^{p\beta} \big)\\
				&\leq 2^{k(p-1)}(1+\eta_1)^k\E|X^{i,N}_S|^p+2^{p-1}\frac{[2^{p-1}(1+\eta_1)]^k-1}{2^{p-1}(1+\eta_1)-1}\big(1\wedge (T-S)^{p\beta} \big).
			\end{aligned}
		\end{equation}
		Combining \eqref{eq-4.18}, we have
		\begin{align*}
			\Pi_k
			&\leq 2^{k(p-1)}\eta_1(1+\eta_1)^k\E|X^{i,N}_S|^p+\Big[\frac{2^{p-1}\eta_1(2^{k(p-1)}(1+\eta_1)^k-1)}{2^{p-1}(1+\eta_1)-1}+1\Big]\big(1\wedge (T-S)^{p\beta} \big)\\
			&\leq 2^{k(p-1)}(1+\eta_1)^k\Big(1\wedge (T-S)^{p\beta}+\eta_1\E|X^{i,N}_S|^p\Big).
		\end{align*}
		Notice that
			\begin{align*}
				&\E\|X^{i,N}\|^p_{S,T,\infty}\\
				&\leq
				\big([\frac{T-S}{\Delta_1}]+2 \big)^{p-1} \Big[\E|X^{i,N}_S|^p+\sum_{i=0}^{[\frac{T-S}{\Delta_1}]}\E\|X^{i,N}\|^p_{S+i\Delta_1,(S+(i+1)\Delta_1)\wedge T,\beta}\Delta_1^{p\beta}\Big]\\
				&\leq \big([\frac{T-S}{\Delta_1}]+2 \big)^{p-1}
				\Big[\E|X^{i,N}_S|^p+\sum_{i=0}^{[\frac{T-S}{\Delta_1}]}\Pi_i\Big]\\
				&\leq \big([\frac{T-S}{\Delta_1}]+2 \big)^{p-1}
				\Big[\E|X^{i,N}_S|^p+\sum_{i=0}^{[\frac{T-S}{\Delta_1}]}[2^{i(p-1)}(1+\eta_1)^i]\Big(1\wedge (T-S)^{p\beta}+\eta_1\E|X^{i,N}_S|^p\Big)\Big]\\
				&\leq \big([\frac{T-S}{\Delta_1}]+2 \big)^{p-1}
				\Big[\E|X^{i,N}_S|^p+\sum_{i=0}^{[\frac{T-S}{\Delta_1}]}2^{i(p-1)}\sum_{i=0}^{[\frac{T-S}{\Delta_1}]}(1+\eta_1)^i\Big(1\wedge (T-S)^{p\beta}+\eta_1\E|X^{i,N}_S|^p\Big)\Big]\\
				&\leq \big([\frac{T-S}{\Delta_1}]+2 \big)^{p-1}
				\Big[\E|X^{i,N}_S|^p+\Big[\frac{2^{[\frac{T-S}{\Delta_1}](p-1)}-1}{2^{p-1}-1}\Big]\\
				&\quad\quad\quad\quad\times\frac{(1+\eta_1)^{[\frac{T-S}{\Delta_1}]+1}-1}{\eta_1}\Big(1\wedge (T-S)^{p\beta}+\eta_1\E|X^{i,N}_S|^p\Big)\Big]\\
				&\leq \big([\frac{T-S}{\Delta_1}]+2 \big)^{p-1}
				\Big[\Big(\Big[\frac{2^{[\frac{T-S}{\Delta_1}](p-1)}-1}{2^{p-1}-1}\Big][(1+\eta_1)^{[\frac{T-S}{\Delta_1}]+1}-1]+1\Big)\E|X^{i,N}_S|^p\\
				&\quad+\Big[\frac{2^{[\frac{T-S}{\Delta_1}](p-1)}-1}{2^{p-1}-1}\Big]\frac{(1+\eta_1)^{[\frac{T-S}{\Delta_1}]+1}-1}{\eta_1}\Big(1\wedge (T-S)^{p\beta}\Big)\Big]\\
				&\leq \big([\frac{T-S}{\Delta_1}]+2 \big)^{p-1}
				\Big[\Big(\Big[\frac{2^{[\frac{T-S}{\Delta_1}](p-1)}-1}{2^{p-1}-1}\Big][(1+\eta_1)^{[\frac{T-S}{\Delta_1}]+1}-1]+1\big)\E|X^{i,N}_S|^p\\
				&\quad+\Big[\frac{2^{[\frac{T-S}{\Delta_1}](p-1)}-1}{2^{p-1}-1}\Big](1+\eta_1)^{[\frac{T-S}{\Delta_1}]}\Big([\frac{T-S}{\Delta_1}]+1\Big)\Big(1\wedge (T-S)^{p\beta}\Big)\Big].\\
			\end{align*}
		Noting that $\eta=2C\Delta_1^p\leq 1\wedge(T-S)^{p\beta}$ and $\eta_1^{-1}\log(1+\eta_1)\leq 1$, we find that
		\begin{align*}
			(1+\eta_1)^{[ \frac{T-S}{\Delta_1} ] + 1} & \leq \exp\left\{ \frac{T-S+\Delta_1}{\Delta_1} \log(1+\eta_1) \right\} \\
			& = \exp\left\{ (2C(T-S)\Delta_1^{p-1} + 2C\Delta_1^p) \frac{\log(1+\eta_1)}{\eta_1} \right\} \\
			& \leq \exp\left\{ C((T-S)\wedge (T-S)^{(p-1)\beta+1}) + (1\wedge(T-S)^{p\beta}) \right\} \\
			&:=G_1(T-S)
		\end{align*}
		and
		\begin{align*}
			(1+\eta_1)^{[ \frac{T-S}{\Delta_1} ]}\leq \exp\left\{ C((T-S)\wedge (T-S)^{(p-1)\beta+1})  \right\} :=G_2(T-S).
		\end{align*}
		By \eqref{eq-4.15}, we can see
		\begin{equation}
			\begin{aligned}
				\big[\frac{T-S}{\Delta_1}\big]\leq \frac{T-S}{\Delta_1}&\leq C\left\{(1\vee (T-S))\|B^{H,i}\|_{S,T,\beta}^{\frac{1}{\beta}}+((T-S)\vee (T-S)^{1-\beta})\right\}\\
				&:=G(T-S,B^{H,i}).
			\end{aligned}
		\end{equation}
		Since $\|B^{H}\|^q_{S,T,\beta}\leq C_{q,H,\beta}(T-S)^{q(H-\beta)}$for any $q \geq 1$ (see, e.g., \cite[Lemma 8]{Saussereau12}), so we get
		\begin{align*}
			G(T-S,B^{H,i})&\leq C\left\{(1\vee (T-S))C_{q,H,\beta}T^{\frac{H-\beta}{\beta}}+((T-S)\vee (T-S)^{1-\beta})\right\}\\
			&\leq C\left\{((T-S)^\frac{H-\beta}{\beta}\vee (T-S)^{\frac{H}{\beta}})+((T-S)\vee (T-S)^{1-\beta})\right\}\\
			&:=G_3(T-S).
		\end{align*}
		Thus, we obtain
		\begin{equation}
			\begin{aligned}
				&\E\|X^{i,N}\|^p_{S,T,\infty}\\
				&\leq \big(G_3(T-S)+2 \big)^{p-1}
				\Big[\Big(\Big[\frac{2^{G_3(T-S)(p-1)}-1}{2^{p-1}-1}\Big][G_1(T-S)-1]+1\big)\E|X^{i,N}_S|^p\\
				&\quad+\Big[\frac{2^{G_3(T-S)(p-1)}-1}{2^{p-1}-1}\Big]G_2(T-S)\Big(G_3(T-S)+1\Big)\Big(1\wedge (T-S)^{p\beta}\Big)\Big]\\
				&:=C_1.
			\end{aligned}
		\end{equation}
		Applying Lemma 5.2 in \cite{fan} with $n =1+[\frac{T-S}{\Delta}]$, we can see
		\begin{equation}\label{eq-3.29}
			\begin{aligned}
				&\E\|X^{i,N}\|^p_{S,T,\beta}\\
				&\leq \Big(1+\big[\frac{T-S}{\Delta_1}\big]\Big)^{p(1-\beta)}\max_{0\leq k \leq [\frac{T-S}{\Delta_1}]}\E\|X^{i,N}\|^p_{S+k\Delta_1,(S+(k+1)\Delta_1)\wedge T,\beta}\\
				&\leq \Big(1+\big[\frac{T-S}{\Delta_1}\big]\Big)^{p(1-\beta)}\Big(\Delta_1^{-p\beta}(1\wedge (T-S)^{p\beta})+2C\Delta_1^{p(1-\beta)}\max_{0\leq k \leq [\frac{T-S}{\Delta_1}]}\E|X^{i,N}_{S+k\Delta_1}|^p\Big)\\
				&\leq \Big(1+\frac{T-S}{\Delta_1}\Big)^{p(1-\beta)}\Big(\frac{T-S}{\Delta_1}\Big)^{p\beta}+C(\Delta_1+T-S)^{p(1-\beta)}\E\|X^{i,N}\|^p_{S,T,\infty}\\
				&\leq (1+G_3(T-S))^{p(1-\beta)}G_3(T-S)^{p\beta}\\
				&\quad+C\Big(\big(1\wedge (T-S)^{\beta}\big)+(T-S)\Big)^{p(1-\beta)}\E\|X^{i,N}\|^p_{S,T,\infty}\\
				&:=C_2.
			\end{aligned}
		\end{equation}
		Similarly, we can show
		\begin{align*}
			\E\|X^i\|^p_{S,T,\infty}+\E\|X^i\|^p_{S,T,\beta}< \infty.
		\end{align*}
	\end{proof}
	
		\begin{lemma}\label{lem3.3}
		Suppose that $\mu \in \mathscr P_{2,\beta}(W^d_{S,T})$, then it holds that
		\begin{equation}
			\E\W^2_{2,S,T,\beta}(\nu^N,\L(X^i))\leq C\epsilon_N,
		\end{equation}
			where
		\begin{align*}
			\epsilon_N=\left\{\begin{array}{ll}N^{-1/2}+N^{-(q-p)/q},&\text{if }p>d/2\text{ and }q\neq2p,\\N^{-1/2}\log(1+N)+N^{-(q-p)/q},&\text{if }p=d/2\text{ and }q\neq2p,\\N^{-p/d}+N^{-(q-p)/q},&\text{if }p\in[2,d/2)\text{ and }q\neq d/(d-p).\end{array}\right.
		\end{align*}
		\end{lemma}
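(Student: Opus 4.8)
The plan is to estimate separately the two terms that make up the metric $\W_{2,S,T,\beta}$ in \eqref{3.10}. Writing $\mu=\L(X^i)$ for the common law of the noninteracting particles and using $(a+b)^2\le 2a^2+2b^2$, it suffices to bound both
$$\E\,\W_{2,S,T}^2(\nu^N,\mu)\qquad\text{and}\qquad \E\Big(\sup_{S\le s_1<s_2\le T}\frac{W_{2,s_1,s_2}^c(\nu^N,\mu)}{(s_2-s_1)^\beta}\Big)^2$$
by $C\epsilon_N$. The crucial preparatory fact, supplied by Lemma \ref{lem-3.2}, is that $\mu$ has finite moments of every order for both $\|\cdot\|_{S,T,\infty}$ and $\|\cdot\|_{S,T,\beta}$; this is exactly what allows one to insert an arbitrarily large moment exponent $q$ into a Fournier--Guillin estimate and thereby produce the tail term $N^{-(q-p)/q}$ in $\epsilon_N$.

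For the increment term, I would fix $S\le s_1<s_2\le T$. By the definition \eqref{AddHO2}, $W_{2,s_1,s_2}^c(\nu^N,\mu)$ is built from couplings of the two-time marginals evaluated against the increment cost $c(x,y)=|x_1-y_1-(x_2-y_2)|^2$, so it is controlled by a $2$-Wasserstein distance (on $\R^{2d}$, or equivalently for the increments on $\R^d$) between $\mu$ and $\nu^N$ restricted to the times $(s_1,s_2)$. Applying the Fournier--Guillin theorem in the relevant finite dimension to this restricted measure yields the three-regime rate $\epsilon_N$, and the accompanying moment constant is made proportional to $(s_2-s_1)^{q\beta}$ through the bound $\E|X_{s_2}^i-X_{s_1}^i|^q\le (s_2-s_1)^{q\beta}\E\|X^i\|_{S,T,\beta}^q$, which is finite by Lemma \ref{lem-3.2} and cancels the normalisation $(s_2-s_1)^{\beta}$. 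The one genuinely nontrivial point here is the supremum over the continuum of pairs $(s_1,s_2)$: I would treat $(s_1,s_2)\mapsto W_{2,s_1,s_2}^c(\nu^N,\mu)$ as a random field and pass from the pointwise bound to the bound on the expected supremum by a Garsia--Rodemich--Rumsey / Kolmogorov continuity argument, again using the all-order Hölder moments from Lemma \ref{lem-3.2}.

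The main obstacle is the sup-norm term $\E\,\W_{2,S,T}^2(\nu^N,\mu)$, because $\W_{2,S,T}$ is the $2$-Wasserstein distance on the infinite-dimensional path space $W_{S,T}^d$, where a direct Fournier--Guillin estimate would suffer from the curse of dimensionality. The idea is to use the Hölder regularity of the paths to reduce to finitely many coordinates: on a time grid of mesh $(T-S)/n$ replace each path by its piecewise-linear interpolant, so that by the diagonal coupling of Lemma \ref{lem3.1} the interpolation error costs at most $n^{-2\beta}(T-S)^{2\beta}\E\|X^i\|_{S,T,\beta}^2$ in $\E\,\W_{2,S,T}^2$, and then apply Fournier--Guillin to the interpolated (finite-dimensional) measures. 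The delicate part is to balance the mesh $n=n(N)$ against the finite-dimensional rate, exploiting the freedom in $q$ from Lemma \ref{lem-3.2}, so that the resulting exponent is not degraded and matches $\epsilon_N$; this bookkeeping is where I expect the real work to lie, and I would model it on the corresponding estimate of Fan and Zhang \cite{fan}.

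Finally, assembling the two estimates through the decomposition of $\W_{2,S,T,\beta}$ gives $\E\,\W_{2,S,T,\beta}^2(\nu^N,\mu)\le C\epsilon_N$, which is the assertion.
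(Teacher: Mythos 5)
Your decomposition of $\W_{2,S,T,\beta}$ into the sup-norm part and the H\"older part is the natural starting point, but the route you take for the sup-norm term has a genuine gap exactly where you place ``the real work''. If you discretize $[S,T]$ into $n$ subintervals and apply Fournier--Guillin to the interpolated measures, you are invoking the theorem in dimension $nd$, where the bound for the squared $W_2$-distance degrades to roughly $N^{-2/(nd)}$ plus the moment tail term. The interpolation error is of order $n^{-2\beta}$, so optimizing the mesh $n=n(N)$ forces $n\to\infty$ and the finite-dimensional term collapses to a rate far slower than the dimension-$d$ rate $\epsilon_N$ claimed in the lemma. This is not bookkeeping that the all-order moments of Lemma \ref{lem-3.2} can repair: a larger $q$ only improves the tail term $N^{-(q-p)/q}$, not the dimensional term. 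So the central assertion of your plan --- that the exponent ``is not degraded and matches $\epsilon_N$'' --- is unsubstantiated, and it is precisely the content of the lemma.

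The paper takes a different and much shorter route: it applies Fournier--Guillin only to the one-time marginals, obtaining $\E W_2^2(\L_{X_t^i},\nu_t^N)\le C\epsilon_N$ for each fixed $t$, and then passes to $\W_{2,S,T}$ by arguing that the supremum over $t$ inside the coupling integral can be replaced by evaluation at a single time $t'$; for the H\"older part it does not chain at all, but invokes the comparison $W^c_{2,s_1,s_2}\le 2\,\W_{2,S,T}$ from \cite{fan} together with an auxiliary empirical measure $\nu^M$, completeness of $(\mathscr P_{2,\beta}(W^d_{S,T}),\W_{2,S,T,\beta})$, and finally ``sets $\epsilon=\sqrt{\epsilon_N}$''. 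Your instinct that the passage from fixed-time to path-space estimates is the delicate point is sound --- the paper's own treatment of it is terse --- but your proposed mechanism for that passage provably loses the rate, and your Garsia--Rodemich--Rumsey treatment of the H\"older part, while plausible in outline, is not what the paper does and would require quantitative increment estimates for the random field $(s_1,s_2)\mapsto W^c_{2,s_1,s_2}(\nu^N,\mu)$ near the diagonal that you have not supplied.
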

		\begin{proof}
			Let p=2, for any $t\in [0,T]$ and $q>p$, according to lemma \ref{lem-3.2} and the Wasserstein distance estimate in Fournier and Guillin \cite[Theorem 1]{Fou},  there exists a constant $C$ such that
			\begin{equation}
				\E W^2_2(\L_{X^i_t},\nu_t^N)\leq C(\E|X^i_t|^q)^{\frac{2}{q}}\epsilon_N\leq C(\E\|X^i\|^q_{T,\infty})^{\frac{2}{q}}\epsilon_N\leq C\epsilon_N.
			\end{equation}
			By the definition of Wasserstein distance, there exists $\pi_t^*\in \mathscr C(\L_{X^i_t},\nu_t^N)$ such that
			\begin{equation}
				\int_{\R^d \times \R^d}|x-y|^2\pi_t^*(\d x,\d y)=W^2_2(\L_{X^i_t},\nu_t^N)
			\end{equation}
			for any $t\in [0,T]$. According to \eqref{AddHO1},
			we can see that for a certain point $t'$ in $[S,T]$
			\begin{align*}
				\E\W^2_{2,S,T}(\L_{X^i},\nu^N)
				&\leq \E\int_{W^d_{S,T}\times W^d_{S,T}}\|\gamma_1-\gamma_2\|^2_{S,T,\infty}\pi^*(\d \gamma_1,\d \gamma_2)\\
				&\leq \E\int_{W^d_{S,T}\times W^d_{S,T}}\big(\sup_{S\leq t \leq T}|\gamma_1(t)-\gamma_2(t)|\big)^2\pi^*(\d \gamma_1,\d \gamma_2)\\
				&\leq \E\int_{W^d_{S,T}\times W^d_{S,T}}\sup_{S\leq t \leq T}|\gamma_1(t)-\gamma_2(t)|^2\pi^*(\d \gamma_1,\d \gamma_2)\\
				&\leq \E\int_{W^d_{S,T}\times W^d_{S,T}}|\gamma_1(t')-\gamma_2(t')|^2\pi^*(\d \gamma_1,\d \gamma_2)\\
				&\leq\E W^2_2(\L_{X^i_{t'}},\nu_{t'}^N)\\
				&\leq \E\sup_{0\leq t\leq T}W^2_2(\L_{X^i_{t}},\nu_{t}^N).
			\end{align*}
			Therefore,
			\begin{equation}
				\E\lim_{N\to \infty}\W_{2,S,T}(\L_{X^i},\nu^N)=0.
			\end{equation}
			According to Lemma 3.2(2) in \cite{fan}, we obtain
			\begin{align*}
				\lim_{N\to \infty}\sup_{S\leq s_1<s_2\leq T}W^c_{2,s_1,s_2}(\nu^N,\L_{X^i})\leq \lim_{N\to \infty}2\W_{2,S,T}(\nu^N,\L_{X^i})=0.
			\end{align*}
			Let $\nu^M_t:=\frac{1}{M}\sum\limits_{j=1}^M\delta_{{X}^j_{t}}$ also be an empirical measure regarding $X^j_t$, we deduce that for every $\epsilon > 0$, there
			exists $N_0 \in \N$ such that
			\begin{align*}
				\E\W_{2,S,T,\beta}(\nu^M,\nu^N)\leq \epsilon, \quad N,M>N_0.
			\end{align*}
			By the triangle inequality,
			\begin{align*}
				\E W_{2,s_1,s_2}^c(\nu^N,\L_{X^i})&=\E\big( \lim_{M\to \infty}W_{2,s_1,s_2}^c(\nu^N,\L_{X^i})\big)\\
				&\leq \E\big( \lim_{M\to \infty}W_{2,s_1,s_2}^c(\nu^N,\nu^M)\big)+\E \big( \lim_{M\to \infty}W_{2,s_1,s_2}^c(\nu^M,\L_{X^i})\big)\\
				&\leq \E \big( \lim_{M\to \infty}W_{2,s_1,s_2}^c(\nu^N,\nu^M)\big).
			\end{align*}
			Hence,
			\begin{align*}
				\E\sup_{S\leq s_1<s_2\leq T}\frac{W^c_{2,s_1,s_2}(\nu^N,\L_{X^i})}{(s_2-s_1)^{\beta}}&\leq \E\sup_{S\leq s_1<s_2\leq T}\lim_{M\to \infty}\frac{W_{2,s_1,s_2}^c(\nu^N,\nu^M)}{(s_2-s_1)^{\beta}}\\
				&\leq \E\limsup_{M \to \infty}\sup_{S\leq s_1<s_2\leq T}\frac{W_{2,s_1,s_2}^c(\nu^N,\nu^M)}{(s_2-s_1)^{\beta}}\\
				&\leq \E\limsup_{M \to \infty}\W_{2,S,T,\beta}(\nu^M,\nu^N)\leq \epsilon,\quad N\geq N_0.
			\end{align*}
			Based on the arbitrariness of $\epsilon$, we can set $\epsilon=\sqrt{\epsilon_N}$.
			Combining the above arguments, we have
			\begin{equation}
				\E\W^2_{2,S,T,\beta}(\nu^N,\L(X^i))\leq C\epsilon_N.
			\end{equation}
		\end{proof}

	\begin{theorem}\label{thm-3.4}
		Suppose that $\frac{1}{2}<\beta<H$ and Assumption \ref{ass-2.3} hold, then when $T-S\leq R_0$, it holds that
		\begin{equation}
			\begin{aligned}
			\max_{1\leq k \leq N}{\E\|X^i-X^{i,N}\|^2_{S,T,\infty}}+\max_{1\leq k \leq N}{\E\|X^i-X^{i,N}\|^2_{S,T,\beta}}+\E\W^2_{2,S,T,\beta}(\mu^N,\L(X^i))\leq C\epsilon_N.
			\end{aligned}
		\end{equation}
	   $R_0$ will be given in the proof.
	\end{theorem}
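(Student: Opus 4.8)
The plan is to control the pathwise difference $X^i-X^{i,N}$, which by \eqref{eq-4.1} and \eqref{eq-4.2} has zero initial value and satisfies
\[X^i_t-X^{i,N}_t=\int_S^t\big(b(X^i_r,\L_{X^i_r})-b(X^{i,N}_r,\mu_r^N)\big)\d r+\int_S^t\big(\sigma(X^i_r,\L_{X^i_r})-\sigma(X^{i,N}_r,\mu_r^N)\big)\d B^{H,i}_r.\]
First I would estimate $\|X^i-X^{i,N}\|_{S,T,\beta}$ and $\|X^i-X^{i,N}\|_{S,T,\infty}$ on $[S,T]$ with $T-S\le R_0\le 1$. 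For the drift integral I use the Lipschitz bound in Assumption \ref{ass-2.3}; for the diffusion integral I apply Lemma \ref{lem-2.5} with $X=X^i$, $Y=X^{i,N}$, $\mu=\L_{X^i}$ and $\nu=\mu^N$ (both lie in $\mathscr P_{2,\beta}(W^d_{S,T})$ by Lemma \ref{lem-3.2}), which produces the three groups of terms governed by $\Lambda_1,\dots,\Lambda_5$.

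The decisive structural step is the treatment of the measure arguments. In both the drift factor $W_2(\L_{X^i_r},\mu^N_r)$ and the diffusion factor $\W_{2,S,T,\beta}(\L_{X^i},\mu^N)$ I insert the intermediate empirical law $\nu^N=\frac1N\sum_j\delta_{X^j}$ of the non-interacting system and apply the triangle inequality, e.g.
\[\W_{2,S,T,\beta}(\L_{X^i},\mu^N)\le \W_{2,S,T,\beta}(\L_{X^i},\nu^N)+\W_{2,S,T,\beta}(\nu^N,\mu^N).\]
The statistical error $\W_{2,S,T,\beta}(\L_{X^i},\nu^N)$ is precisely the quantity bounded by $C\epsilon_N$ in Lemma \ref{lem3.3}, which is the only source of the rate $\epsilon_N$. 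The coupling error $\W_{2,S,T,\beta}(\nu^N,\mu^N)$ is controlled by Lemma \ref{lem3.1} in terms of $\frac1N\sum_j\|X^j-X^{j,N}\|_{S,T,\infty}^2$ and $\frac1N\sum_j\|X^j-X^{j,N}\|_{S,T,\beta}^2$, and the analogous marginal splitting handles the drift factor.

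Next I would square, take expectations, and use exchangeability of the $N$ particles: since $(X^j,X^{j,N})_j$ are identically distributed, $\frac1N\sum_j\E\|X^j-X^{j,N}\|_{S,T,\cdot}^2=\E\|X^i-X^{i,N}\|_{S,T,\cdot}^2$ for any fixed $i$, so the empirical averages collapse onto the single quantities $A_\infty:=\max_i\E\|X^i-X^{i,N}\|^2_{S,T,\infty}$ and $A_\beta:=\max_i\E\|X^i-X^{i,N}\|^2_{S,T,\beta}$. This yields a closed system of the form $A_\beta+A_\infty\le C\epsilon_N+\psi(T-S)\,(A_\beta+A_\infty)$, where the coefficients $\Lambda_i$ and the factors $\|B^{H,i}\|_{S,T,\beta_1}$ are absorbed into $\psi(T-S)$ after bounding them in moments via Lemma \ref{lem-3.2}; crucially $\psi(T-S)\to0$ as $T-S\to0$ since every self-coupling coefficient carries a strictly positive power of $(T-S)$. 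Choosing $R_0$ so small that $\psi(R_0)\le\frac12$, I absorb the right-hand side and obtain $A_\beta+A_\infty\le C\epsilon_N$; the bound on $\E\W^2_{2,S,T,\beta}(\mu^N,\L(X^i))$ then follows from the triangle inequality together with Lemmas \ref{lem3.1} and \ref{lem3.3}.

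The main obstacle is closing this self-referential $L^2$ estimate while the driving coefficients are random: because $\|B^{H,i}\|_{S,T,\beta_1}$ and the H\"older norms of the solutions multiply $\|X^i-X^{i,N}\|_{S,T,\beta}$, a crude Cauchy--Schwarz absorption forces \emph{higher} moments of $X^i-X^{i,N}$ onto the right-hand side rather than the $L^2$ norm one wishes to control. To keep the estimate at the $L^2$ level one must exploit the full scale of polynomial moment bounds from Lemma \ref{lem-3.2} (controlling the fBm and solution factors in high $L^p$ so that the residual power of $\|X^i-X^{i,N}\|$ stays equal to $2$), and then verify that the resulting threshold $R_0$ is deterministic and that the rate $\epsilon_N$ survives the absorption. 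This moment bookkeeping, rather than any single inequality, is the delicate part of the argument.
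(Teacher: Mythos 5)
Your overall architecture coincides with the paper's: the same difference equation, the Lipschitz bound for the drift, Lemma \ref{lem-2.5} for the stochastic integral, the insertion of the intermediate empirical law $\nu^N$ with Lemma \ref{lem3.1} for the coupling error and Lemma \ref{lem3.3} for the statistical error, exchangeability to collapse the empirical averages, and a smallness condition on $T-S$ to absorb the self-referential terms. Up to that point you have reproduced the paper's plan faithfully.

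The gap is in the step you yourself flag as decisive: how to close the estimate when $\|B^{H,i}\|_{S,T,\beta_1}$ and the H\"older norms of the solutions multiply $\|X^i-X^{i,N}\|_{S,T,\beta}$. Your proposed fix --- ``controlling the fBm and solution factors in high $L^p$ so that the residual power of $\|X^i-X^{i,N}\|$ stays equal to $2$'' --- cannot work: any H\"older splitting of $\E\bigl[\,\|B^{H,i}\|_{S,T,\beta_1}^{a}\,\|X^i-X^{i,N}\|^2\,\bigr]$ with an unbounded first factor necessarily places the second factor in $L^{2q}$ with $q>1$, so the $L^2$ inequality never closes and polynomial moments from Lemma \ref{lem-3.2} are of no help. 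The paper avoids taking expectations at this stage altogether: it first applies the \emph{pathwise} contraction estimate of \cite[Proposition 3.8]{fan}, which iterates the local bound over a random partition of $[S,T]$ of mesh $\Delta_2$ (defined in \eqref{1LeContr} in terms of $\Lambda_1,\dots,\Lambda_5$ and $\|B^{H,i}\|_{S,T,\beta_1}$) and yields the almost-sure inequality \eqref{2PfLeContr} with the random Gr\"onwall factor $e^{(T-S)\Delta_2^{-1}\log 2}$. Only then does one take $L^2(\Omega)$ norms; the finiteness of $\bigl\|e^{(T-S)\Delta_2^{-1}\log 2}\bigr\|_{L^2(\Omega)}$ rests on Fernique-type \emph{exponential} moments of $\|B^{H}\|_{S,T,\beta_1}^{1/\beta_1}$ and $\|B^{H}\|_{S,T,\beta_1}^{1/(\alpha+\beta_1)}$ --- this is exactly where the restriction $H\in(\frac{\sqrt5-1}{2},1)$ enters --- and $C(T-S)=(1\wedge(T-S)^{\beta_1-\beta})\bigl\|e^{(T-S)\Delta_2^{-1}\log 2}\bigr\|_{L^2(\Omega)}\to0$ as $T-S\to0^+$, which is what makes $R_0=\inf\{R>0:\ C(R)\geq\frac18\}$ a deterministic threshold. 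Without this pathwise iteration and the exponential-moment input, your absorption argument does not go through.
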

	\begin{proof}
		It follows from \eqref{eq-4.1} and \eqref{eq-4.2} that
		\begin{equation}
			\begin{aligned}
				X^{i}_t-X^{i,N}_t=\int_{S}^{t}b(X^{i}_s,\L_{X^i_s})-b(X^{i,N}_s,{\mu}^{N}_s)\d s+\int_{S}^{t}\sigma(X^{i}_s,\L_{X^i_s})-\sigma(X^{i,N}_s,{\mu}^{N}_s)\d B^{H,i}_s.
			\end{aligned}
		\end{equation}
		For any $ S\leq s<t \leq T$ and $t-s\leq 1$, we have
		\begin{equation}
			\begin{aligned}
				&X^{i}_t-X^{i,N}_t-(X^{i}_s-X^{i,N}_s)\\
				&=\int_{s}^{t}b(X^{i}_r,\L_{X^i_r})-b(X^{i,N}_r,{\mu}^{N}_r)\d r+\int_{s}^{t}\sigma(X^{i}_r,\L_{X^i_r})-\sigma(X^{i,N}_r,{\mu}^{N}_r)\d B^{H,i}_r.
			\end{aligned}
		\end{equation}
		By Assumption \ref{ass-2.3},
		\begin{align*}
			\int_{s}^{t}b(X^{i}_r,\L_{X^i_r})-b(X^{i,N}_r,{\mu}^{N}_r)\d B^{H,i}_r&\leq K\int_{s}^{t}(|X^{i}_r-X^{i,N}_r|+W_2(\L_{X^i_r}, \mu_r^{N}))\d r\\
			&\leq K(\|X^{i}-X^{i,N}\|_{s,t,\infty}+\W_{2,S,T,\beta}(\L_{X^i_r}, \mu^{N}))(t-s),
		\end{align*}
		combining this with Lemma \ref{lem-2.5}, we get
		\begin{align*}
			&\int_{s}^{t}\sigma(X^{i}_r,\L_{X^i_r})-\sigma(X^{i,N}_r,{\mu}^{N}_r)\d B^{H,i}_r\\
			\le&\|B^{H,i}\|_{s,t,\beta_1} \left(\Lambda_1 (t-s)^{\beta_1}+\Lambda_2  (t-s)^{\alpha+\beta_1}\right)\|X^i-X^{i,N}\|_{s,t,\infty}\cr
			&+\Lambda_3 \|B^{H,i}\|_{s,t,\beta_1} (t-s)^{\beta+\beta_1}\|X^i-X^{i,N}\|_{s,t,\beta}\cr
			&+\|B^{H,i}\|_{s,t,\beta_1} \left(\Lambda_4 (t-s)^{\beta_1}+\Lambda_5 (t-s)^{\alpha+\beta_1}\right)\W_{2,S,T,\beta}(\L_{X^i}, \mu^{N})).
		\end{align*}
		Hence,
		\begin{align*}
			&\|X^{i}-X^{i,N}\|_{s,t,\beta}\\
			\le& \,\Lambda_3\|B^{H,i}\|_{s,t,\beta_1} (t-s)^{ \beta_1}\|X^i-X^{i,N}\|_{s,t,\beta}\cr
			&+\left[\|B^{H,i}\|_{s,t,\beta_1} (\Lambda_1(t-s)^{\beta_1-\beta}+\Lambda_2(t-s)^{\alpha+\beta_1-\beta})+K(t-s)^{1-\beta}\right]\|X^i-X^{i,N}\|_{s,t,\infty}\cr
			&+\left[\|B^{H,i}\|_{s,t,\beta_1} (\Lambda_4(t-s)^{\beta_1-\beta}+\Lambda_5(t-s)^{\alpha+\beta_1-\beta})+K(t-s)^{1-\beta}\right]\W_{2,S,T,\beta}(\L_{X^i}, \mu^{N})).
		\end{align*}
By   \cite[Proposition 3.8]{fan} and $x\leq \e^x$ for $x>0$, we have
\begin{align}\label{2PfLeContr}
	\|X^i-X^{i,N}\|_{S,T,\infty}
	& \leq 2 (1\wedge (T-S)^{\beta_1})e^{(T-S)\Delta_2^{-1}\log 2}\W_{2,S,T,\beta}(\L(X^i),\mu^N)
\end{align}
and
\begin{align*}
\|X^i-X^{i,N}\|_{S,T,\beta}&\leq  2(1\wedge (T-S)^{\beta_1-\beta}) \left((T-S)\Delta_2^{-1}\right)\left(  e^{(T-S)\Delta_2^{-1}\log 2}+\frac { 1} {2}\right)\W_{2,S,T,\beta}(\L(X^i),\mu^N)\\
&\leq 4 (1\wedge (T-S)^{\beta_1})e^{(T-S)\Delta_2^{-1}\log 2}\W_{2,S,T,\beta}(\L(X^i),\mu^N),
\end{align*}
where
\begin{align}\label{1LeContr}
\Delta_2:=&\left(\frac { 1  \wedge (T-S)^{\beta_1}} {3(3\Lambda_1\vee \Lambda_3\vee3\Lambda_4 )\|B^{H,i}\|_{S,T,\beta_1}}\right)^{\frac 1 {\beta_1}}\wedge\left(\frac {1\wedge (T-S)^{\beta_1}} {9 (\Lambda_2\vee \Lambda_5)\|B^{H,i}\|_{S,T,\beta_1}}\right)^{\frac 1 {\alpha+\beta_1}}\wedge\left(\frac {1\wedge (T-S)^{\beta_1}} {9 (1\vee K)}\right)^{\frac 1 {\beta_1}}
\end{align}
with $\Lambda_1,\cdots,\Lambda_5$ giving by \eqref{1-EsNoi}.
	Notice that $1\wedge (T-S)^{\beta_1}\leq 1\wedge (T-S)^{\beta_1-\beta}$ and $x\leq \e^x$ for $x>0$. Let $C(T-S)=(1\wedge (T-S)^{\beta_1-\beta})\left\|e^{(T-S)\Delta_2^{-1}\log 2}\right\|_{L^2(\Omega)}$, \cite{fan} tell us that $\lim\limits_{T-S \to 0^+}C(T-S)=0$ and using Lemma \ref{lem3.1}, we get
	\begin{align*}
		&\sqrt{\E\|X^i-X^{i,N}\|^2_{S,T,\infty}}\\
		&\leq 2(1\wedge (T-S)^{\beta_1-\beta})  \left\|e^{(T-S)\Delta_2^{-1}\log 2}\right\|_{L^2(\Omega)}\E\W_{2,S,T,\beta}(\L(X^i),\mu^N)\\
		&\leq 2C(T-S)  \Big(\E\W_{2,S,T,\beta}(\L(X^i),\nu^N)+\E\W_{2,S,T,\beta}(\nu^N,\mu^N)\Big)\\
		&\leq 2C(T-S)  \Big(\E\W_{2,S,T,\beta}(\L(X^i),\nu^N)+\E\sqrt{\frac{1}{N}\sum_{j=1}^N\|X^i-X^{i,N}\|^2_{S,T,\infty}}+\E\sqrt{\frac{1}{N}\sum_{j=1}^N\|X^i-X^{i,N}\|^2_{S,T,\beta}}\Big)\\
		&\leq 2C(T-S)\Big(\E\W_{2,S,T,\beta}(\L(X^i),\nu^N)+\sqrt{\E\|X^i-X^{i,N}\|^2_{S,T,\infty}}+\sqrt{\E\|X^i-X^{i,N}\|^2_{S,T,\beta}}\Big),
	\end{align*}
	and
	\begin{align*}
		&\sqrt{\E\|X^i-X^{i,N}\|^2_{S,T,\beta}}\\
		&\leq 2(1\wedge (T-S)^{\beta_1-\beta}) \left\|(T-S)\Delta_2^{-1}\left(e^{(T-S)\Delta_2^{-1}\log 2}+1\right)\right\|_{L^2(\Omega)}\W_{2,S,T,\beta}(\L(X^i),\mu^N)\\
		&\leq 4(1\wedge (T-S)^{\beta_1-\beta})  \left\|e^{(T-S)\Delta_2^{-1}\log 2}\right\|_{L^2(\Omega)}\W_{2,S,T,\beta}(\L(X^i),\mu^N)\\
		&\leq 4C(T-S)\Big(\E\W_{2,S,T,\beta}(\L(X^i),\nu^N)+\sqrt{\E\|X^i-X^{i,N}\|^2_{S,T,\infty}}+\sqrt{\E\|X^i-X^{i,N}\|^2_{S,T,\beta}}\Big).
	\end{align*}
	Let\begin{align*}
		R_0=\inf\Big\{ R>0|C(R)\geq \frac{1}{8}\Big\}.
	\end{align*}
	we can see that for any $T-S\leq R_0$, we have
	\begin{equation}
		\sqrt{\E\|X^i-X^{i,N}\|^2_{S,T,\infty}}\leq \E\W_{2,S,T,\beta}(\L(X^i),\nu^N),
	\end{equation}
	and \begin{equation}
		\sqrt{\E\|X^i-X^{i,N}\|^2_{S,T,\beta}}\leq 2\E\W_{2,S,T,\beta}(\L(X^i),\nu^N).
	\end{equation}
	Through trigonometric inequality, we have
	\begin{equation}
		\begin{aligned}
			&\E\W_{2,S,T,\beta}(\L(X^i),\mu^N)\\
			&\leq \E\W_{2,S,T,\beta}(\L(X^i),\nu^N)+\sqrt{\E\|X^i-X^{i,N}\|^2_{S,T,\infty}}+\sqrt{\E\|X^i-X^{i,N}\|^2_{S,T,\beta}}\\
			&\leq 4\E\W_{2,S,T,\beta}(\L(X^i),\nu^N).
		\end{aligned}
	\end{equation}
	By Lemma \ref{lem3.3}, we can conclude that:
		\begin{equation}
		\begin{aligned}
			{\E\|X^i-X^{i,N}\|^2_{S,T,\infty}}+{\E\|X^i-X^{i,N}\|^2_{S,T,\beta}}+\E\W^2_{2,S,T,\beta}(\mu^N,\L(X^i))\leq C\epsilon_N.
		\end{aligned}
	\end{equation}
\end{proof}

\section{EM method}
In this section, we establish the classical EM method for the interacting particle system \eqref{eq-4.1}. For $n\geq 1$ large enough such that $\Delta=T/n\in (0,1]$ and $k=0,1, ..., n-1$, define $t_k=k\Delta$. The numerical solutions are then generated by the EM method
\begin{equation}
	Z_{t_{k+1}}^{i,N}=Z_{t_k}^{i,N}+b(Z_{t_k}^{i,N},\mu_{t_k}^{Z,N})\Delta+\sigma(Z_{t_k}^{i,N},\mu_{t_k}^{Z,N})\Delta B^{H,i}_{t_k},
\end{equation}
where $Z^{i,N}_0=X^{i}_0$, $\Delta B^{H,i}_{t_k}=B^{H,i}_{t_{k+1}}-B^{H,i}_{t_k}$ and the empirical measures $\mu_{t_k}^{Z,N}(\cdot)=\frac{1}{N}\sum_{j=1}^{N}\delta_{Z_{t_k}^{j,N}}(\cdot)$. The first is the piecewise constant extension given by
\begin{equation}
	\bar{Z}_t^{i,N}=Z_{t_k}^{i,N}, t_k\leq t \leq t_{k+1}.
\end{equation}
and the second is the continuous extension of the EM method defined by
\begin{equation}\label{eq-5.3}
	Z_{t}^{i,N}=\bar Z_{t_k}^{i,N}+\int_{t_k}^{t}b(\bar Z_{s}^{i,N},\bar \mu_{s}^{Z,N})\d s+\int_{t_k}^{t}\sigma(\bar Z_{s}^{i,N},\bar \mu_{s}^{Z,N})\d B^{H,i}_{s}.
\end{equation}
Here, the initial value $\bar{Z}_0^{i,N}=X^{i}_0$ and $\bar \mu_{t}^N(\cdot)=\frac{1}{N}\sum_{j=1}^{N}\delta_{\bar Z_{t}^{j,N}}(\cdot)$. for all $t \in [0, T ]$, we have
\begin{equation}\label{eq-5.4}
	Z_{t}^{i,N}=\bar Z_{0}^{i,N}+\int_{0}^{t}b(\bar Z_{s}^{i,N},\bar \mu_{s}^{Z,N})\d s+\int_{0}^{t}\sigma(\bar Z_{s}^{i,N},\bar \mu_{s}^{Z,N})\d B^{H,i}_{s}.
\end{equation}

\begin{lemma}\label{lem-4.1}
	Let $\frac{1}{2}<\beta<H$, Assumption \ref{ass-2.3} hold. Similar to the proof of Lemma \ref{lem-3.2}, we can show
	\begin{equation}
		\E\|Z^{i,N}\|^2_{T,\infty}+\E\|Z^{i,N}\|^2_{T,\beta}< \infty.
	\end{equation}
\end{lemma}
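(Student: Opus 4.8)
The plan is to follow the same scheme as the proof of Lemma \ref{lem-3.2}, now applied to the continuous Euler--Maruyama extension \eqref{eq-5.4} with $p=2$ and $S=0$. The only structural difference is that in \eqref{eq-5.4} the coefficients are evaluated at the piecewise-constant (frozen) arguments $\bar Z_s^{i,N}$ and $\bar\mu_s^{Z,N}$ rather than at the running state, so once the fractional-derivative estimate is reproduced for these frozen arguments the entire machinery carries over unchanged.

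First I would fix $0\le s<t\le T$ with $t-s\le 1$, split $Z_t^{i,N}-Z_s^{i,N}$ into its drift and stochastic parts, and apply the fractional integration by parts formula \eqref{fb} to the stochastic integral with $\alpha\in(1-\beta,\beta)$, exactly as in \eqref{eq4.6}. The drift term is controlled by the linear growth of $b$ as in Lemma \ref{lem-3.2}, while for the stochastic term I would bound $|D_{t-}^{1-\alpha}B_{t-}^{H,i}(r)|$ by \eqref{eq4.8} and estimate the fractional derivative $|D_{s+}^\alpha\sigma(\bar Z_\cdot^{i,N},\bar\mu_\cdot^{Z,N})(r)|$ following \eqref{eq4.7}, using $|\sigma|\le M$, the bounds $\|\nabla\sigma\|_\infty,\|D^L\sigma\|_\infty$, and the truncation $(\cdot)\wedge 2M$.

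The main obstacle is precisely this estimate of $D_{s+}^\alpha\sigma(\bar Z_\cdot^{i,N},\bar\mu_\cdot^{Z,N})(r)$: unlike $X^{i,N}$ in Lemma \ref{lem-3.2}, the step process $\bar Z^{i,N}$ is not $\beta$-H\"older continuous, so one cannot simply insert a H\"older seminorm of $\bar Z^{i,N}$ into the numerator $\sigma(\bar Z_r^{i,N},\bar\mu_r^{Z,N})-\sigma(\bar Z_u^{i,N},\bar\mu_u^{Z,N})$. The observation that rescues the argument is that $\bar Z^{i,N}$ is constant on each mesh interval: the numerator vanishes identically when $u$ and $r$ lie in the same interval $[t_k,t_{k+1})$, which removes the non-integrable singularity of the kernel $(r-u)^{-\alpha-1}$ at $u=r$; and when $u,r$ lie in distinct intervals the increment $\bar Z_r^{i,N}-\bar Z_u^{i,N}$ is an increment of $Z^{i,N}$ between two grid points, hence controlled by $\|Z^{i,N}\|_{s,t,\beta}$ up to a mesh correction of order $\Delta^\beta$ absorbed by the truncation. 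The same reasoning applies to $W_2(\bar\mu_r^{Z,N},\bar\mu_u^{Z,N})\le(\frac1N\sum_{j=1}^N|\bar Z_r^{j,N}-\bar Z_u^{j,N}|^2)^{1/2}$. This reproduces the bound \eqref{eq4.7} with $\|Z^{i,N}\|_{s,t,\beta}$ and $\frac1N\sum_{j=1}^N\|Z^{j,N}\|^2_{s,t,\beta}$ in place of the corresponding $X$-norms.

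From here the argument is identical to Lemma \ref{lem-3.2}: combining the two fractional-derivative bounds yields an estimate for $|\int_s^t\sigma\,\d B^{H,i}|$, assembling drift and diffusion produces a self-referential inequality for $\|Z^{i,N}\|_{s,t,\beta}$ of the form \eqref{eq-4.13}, and Young's inequality isolates the linear $\|Z^{i,N}\|_{s,t,\beta}$ terms. Using the exchangeability of the particles (the initial data $Z_0^{j,N}=X_0^j$ are i.i.d. and the dynamics are symmetric), I would replace $\E(\frac1N\sum_{j=1}^N\|Z^{j,N}\|^2_{s,t,\beta})^{1/2}$ by $(\E\|Z^{i,N}\|_{s,t,\beta}^2)^{1/2}$, then take expectations to close the inequality on a short interval of length $\Delta_1$ chosen as in \eqref{eq-4.15}. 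Iterating over the $\Delta_1$-mesh controls $\E\|Z^{i,N}\|_{T,\infty}^2$, and a final application of Lemma 5.2 in \cite{fan}, as in \eqref{eq-3.29}, upgrades this to $\E\|Z^{i,N}\|_{T,\beta}^2$, giving the stated finiteness.
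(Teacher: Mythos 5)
Your proposal is correct, and it follows exactly the route the paper itself indicates: the paper offers no actual proof of Lemma \ref{lem-4.1} beyond the phrase ``similar to the proof of Lemma \ref{lem-3.2}'', and your plan is precisely to re-run that argument for \eqref{eq-5.4}. The one place where the two differ is to your credit: you isolate the genuine obstacle that the paper's ``similarly'' glosses over, namely that the frozen argument $\bar Z^{i,N}$ is a step function and hence not $\beta$-H\"older, so the estimate \eqref{eq4.7} cannot be copied verbatim; your resolution (increments of $\sigma(\bar Z^{i,N}_\cdot,\bar\mu^{Z,N}_\cdot)$ vanish when $u,r$ lie in the same mesh cell, which kills the singularity of $(r-u)^{-1-\alpha}$ for a.e.\ $r$, and across cells they are grid increments of $Z^{i,N}$ controlled by its H\"older seminorm over a slightly enlarged interval) is the correct one, modulo the routine care needed because the relevant grid points $\eta(u),\eta(r)$ may fall just to the left of $s$. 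Two small remarks. First, in your absorption step the H\"older norm appearing on the right-hand side is $\|Z^{i,N}\|_{\eta(s),t,\beta}$ rather than $\|Z^{i,N}\|_{s,t,\beta}$; this is harmless if you iterate over a mesh refining $\{t_k\}$ (or use subadditivity of the seminorm over concatenated intervals), but it should be said. Second, since the lemma only claims finiteness for a fixed discretization $\Delta=T/n$, a much shorter proof is available and is arguably what the authors had in mind: on each cell $[t_k,t_{k+1}]$ the continuous extension is explicitly $Z^{i,N}_t=Z^{i,N}_{t_k}+b(Z^{i,N}_{t_k},\mu^{Z,N}_{t_k})(t-t_k)+\sigma(Z^{i,N}_{t_k},\mu^{Z,N}_{t_k})(B^{H,i}_t-B^{H,i}_{t_k})$ with $|\sigma|\le M$, so $\|Z^{i,N}\|_{t_k,t_{k+1},\beta}\le |b(Z^{i,N}_{t_k},\mu^{Z,N}_{t_k})|\Delta^{1-\beta}+M\|B^{H,i}\|_{t_k,t_{k+1},\beta}$, the moments of $Z^{i,N}_{t_k}$ are controlled by a finite induction over the $n$ grid points using the linear growth of $b$, and the global seminorm follows from Lemma 5.2 in \cite{fan}; this avoids the fractional-calculus absorption argument entirely. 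Your longer route buys bounds that are closer in form to those of Lemma \ref{lem-3.2}, which is what Lemma \ref{lem-4.2} subsequently exploits, so it is a reasonable choice.
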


\begin{lemma}\label{lem-4.2}
	Let $\frac{1}{2}<\beta<H$, Assumption \ref{ass-2.3} hold, then
	\begin{equation}
		\E\|Z^{i,N}-\bar{Z}^{i,N}\|^2_{t_k,t_{k+1},\infty}\leq G_1(\Delta),
	\end{equation}
	\begin{equation}
		\E\|Z^{i,N}-\bar{Z}^{i,N}\|^2_{t_k,t_{k+1},\beta}\leq G_2(\Delta).
	\end{equation}
\end{lemma}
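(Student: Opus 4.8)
The plan is to exploit the fact that on each mesh subinterval $[t_k,t_{k+1}]$ the piecewise-constant extension freezes both the state and the empirical law. For $s\in[t_k,t_{k+1}]$ one has $\bar Z_s^{j,N}=Z_{t_k}^{j,N}$ for every $j$, whence $\bar\mu_s^{Z,N}=\mu_{t_k}^{Z,N}$, so that the two integrands in \eqref{eq-5.3} are constant in $s$. Consequently, for $t\in[t_k,t_{k+1}]$,
\[
Z_t^{i,N}-\bar Z_t^{i,N}=b\big(Z_{t_k}^{i,N},\mu_{t_k}^{Z,N}\big)(t-t_k)+\sigma\big(Z_{t_k}^{i,N},\mu_{t_k}^{Z,N}\big)\big(B_t^{H,i}-B_{t_k}^{H,i}\big).
\]
This explicit representation is the crux: the diffusion contribution reduces to a constant matrix times an fBm increment, so no fractional integration-by-parts estimate of the type in Lemma \ref{lem-2.5} is needed here.

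For the sup-norm bound I would use that $\sigma$ is bounded, say $\|\sigma\|_\infty\le M$ (recall $\sigma_{ij}\in C_b^{2,(2,0)}$), and that $b$ has linear growth. Taking the supremum over $t\in[t_k,t_{k+1}]$ and estimating $\sup_{t}|B_t^{H,i}-B_{t_k}^{H,i}|\le\|B^{H,i}\|_{T,\beta}\Delta^{\beta}$ gives
\[
\|Z^{i,N}-\bar Z^{i,N}\|_{t_k,t_{k+1},\infty}\le K\big(1+|Z_{t_k}^{i,N}|+W_2(\mu_{t_k}^{Z,N},\delta_0)\big)\Delta+M\|B^{H,i}\|_{T,\beta}\Delta^{\beta}.
\]
Squaring, taking expectations, and invoking Lemma \ref{lem-4.1} to control $\E|Z_{t_k}^{i,N}|^2$ and $\E W_2^2(\mu_{t_k}^{Z,N},\delta_0)=\E\frac1N\sum_{j=1}^N|Z_{t_k}^{j,N}|^2$, together with the Gaussian moment estimate $\E\|B^{H,i}\|_{T,\beta}^2<\infty$ (e.g.\ \cite[Lemma 8]{Saussereau12}), produces a bound $G_1(\Delta)$ with leading order $\Delta^{2\beta}$.

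For the $\beta$-Hölder seminorm, since $\bar Z^{i,N}$ is constant on $[t_k,t_{k+1}]$ the increment of $Z^{i,N}-\bar Z^{i,N}$ equals that of $Z^{i,N}$, so for $t_k\le s<t\le t_{k+1}$,
\[
(Z_t^{i,N}-\bar Z_t^{i,N})-(Z_s^{i,N}-\bar Z_s^{i,N})=b\big(Z_{t_k}^{i,N},\mu_{t_k}^{Z,N}\big)(t-s)+\sigma\big(Z_{t_k}^{i,N},\mu_{t_k}^{Z,N}\big)\big(B_t^{H,i}-B_s^{H,i}\big).
\]
Dividing by $(t-s)^\beta$, the drift contributes a term of order $K(1+|Z_{t_k}^{i,N}|+W_2(\mu_{t_k}^{Z,N},\delta_0))\Delta^{1-\beta}$. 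For the diffusion term the decisive device is to measure the fBm in a strictly larger Hölder exponent $\beta_1\in(\beta,H)$, so that
\[
\frac{|B_t^{H,i}-B_s^{H,i}|}{(t-s)^\beta}\le\|B^{H,i}\|_{T,\beta_1}(t-s)^{\beta_1-\beta}\le\|B^{H,i}\|_{T,\beta_1}\Delta^{\beta_1-\beta},
\]
which extracts a positive power of $\Delta$ that a naive estimate in the $\beta$-norm would miss. Squaring, taking expectations, and again using Lemma \ref{lem-4.1} and the fBm moment bound yields $G_2(\Delta)$ with leading order $\Delta^{2(1-\beta)}\vee\Delta^{2(\beta_1-\beta)}$.

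The only genuine subtlety, and the place to be careful, is this last point: the $\beta$-Hölder seminorm of a Hölder path is scale-invariant, so bounding $|B_t^{H,i}-B_s^{H,i}|/(t-s)^\beta$ by $\|B^{H,i}\|_{T,\beta}$ gives no gain in $\Delta$; one must trade a little regularity and work with $\beta_1>\beta$ to recover a vanishing factor. Everything else is a routine combination of the boundedness and linear growth in Assumption \ref{ass-2.3}, the a priori moments of Lemma \ref{lem-4.1}, and standard fBm moment estimates. One then defines $G_1,G_2$ to be the resulting explicit functions of $\Delta$, both of which tend to $0$ as $\Delta\to0$.
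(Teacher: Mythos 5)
Your proof is correct, but it takes a genuinely different and more elementary route than the paper. The paper does not use the explicit one-step representation at all: it treats $Z^{i,N}-\bar Z^{i,N}$ with the same machinery as the a priori moment bound of Lemma \ref{lem-3.2}, i.e.\ fractional integration by parts for the Young integral, a further sub-partition of $[t_k,t_{k+1}]$ into pieces of length $\Delta_3$, a recursive estimate summed over the sub-pieces, and the scaling bound $\E\|B^{H,i}\|^q_{t_k,t_{k+1},\beta}\le C\Delta^{q(H-\beta)}$ from \cite{Saussereau12}; this yields $G_1(\Delta)=C\Delta^{2\beta}$ and $G_2(\Delta)=C(\Delta^{2(H-\beta)}\vee\Delta^{2\beta(1-\beta)})$. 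You instead observe that on $[t_k,t_{k+1}]$ both $\bar Z^{j,N}$ and $\bar\mu^{Z,N}$ are frozen, so the integrands in \eqref{eq-5.3} are constants and the increment is exactly drift times $(t-s)$ plus a constant matrix times an fBm increment; with $\sigma$ bounded and $b$ of linear growth, everything reduces to Lemma \ref{lem-4.1} and Gaussian moment bounds, with no fractional calculus or sub-partitioning needed. You also correctly isolate the one real subtlety (the $\beta$-H\"older seminorm of the fBm increment gives no gain in $\Delta$ unless one trades regularity). The only difference in output is that measuring the fBm in the global $\beta_1$-H\"older norm with $\beta_1\in(\beta,H)$ gives $\Delta^{2(\beta_1-\beta)}$ rather than the paper's $\Delta^{2(H-\beta)}$; if you instead use the local norm $\|B^{H,i}\|_{t_k,t_{k+1},\beta}$ together with the same Saussereau estimate the paper invokes, you recover the exponent $2(H-\beta)$ exactly, and your drift contribution $\Delta^{2(1-\beta)}$ is in fact sharper than the paper's $\Delta^{2\beta(1-\beta)}$. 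Either version suffices for the use made of this lemma in Theorem \ref{thm-4.3}, and your argument is arguably the cleaner one for this particular step.
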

\begin{proof}
From \eqref{eq-5.3}, for any $t_k\leq s < t \leq t_{k+1}$, we have for $t_{k+1}-t_k=\Delta$,
\begin{equation}
	\begin{aligned}
		&Z_{t}^{i,N}-\bar Z_{t_k}^{i,N}-(Z_{s}^{i,N}-\bar Z_{s_k}^{i,N})\\
		&\leq \int_{s}^{t}b(\bar Z_{r}^{i,N},\bar \mu_{r}^{Z,N})\d r+\int_{s}^{t}\sigma(\bar Z_{r}^{i,N},\bar \mu_{r}^{Z,N})\d B^{H,i}_{r}.
	\end{aligned}
\end{equation}
Similarly to \eqref{eq-3.19},  we can obtain
\begin{equation}\label{eq-4.9}
	\begin{aligned}
		&\E\|Z^{i,N}-\bar{Z}^{i,N}\|^2_{s,t,\beta}\\
		&\leq C\Big(\|B^{H,i}\|^2_{t_k,t_{k+1},\beta}+\big(1+\E|\bar Z^{i,N}_s|^2\big)(t-s)^{2-2\beta}\Big)\\
		&\quad+C(t-s)^{2\beta}\Big(\|B^{H,i}\|^2_{t_k,t_{k+1},\beta}+(t-s)^{2-2\beta}\Big)\E\|\bar Z^{i,N}\|^2_{s,t,\beta}.
	\end{aligned}
\end{equation}	
Without of lost generality, we assume that $C \geq 1$ in \eqref{eq-4.9}, and let
\begin{equation}
	\Delta_3=\Big(\frac{ \Delta^{2\beta}}{6C\|B^{H,i}\|^2_{t_k,t_{k+1},\beta}}\Big)^{\frac{1}{2\beta}}\wedge \Big(\frac{ \Delta^{2\beta}}{6C}\Big)^{\frac{1}{2}}\wedge \Big(\frac{ \Delta^{2\beta}}{6C}\Big)^{\frac{1}{2\beta+2}}\wedge \Big(\frac{ \Delta^{2\beta}}{6C\|B^{H,i}\|^2_{t_k,t_{k+1},\beta}}\Big)^{\frac{1}{4\beta}}.
\end{equation}
Taking $t=s+\Delta_3$ and by combining with Lemma \ref{lem-4.1}, we have
\begin{equation}
	\begin{aligned}
		&\E\|Z^{i,N}-\bar{Z}^{i,N}\|^2_{s,s+\Delta_3,\beta}\\
		&\leq C\Big(\|B^{H,i}\|^2_{t_k,t_{k+1},\beta}+\big(1+\E\|\bar Z^{i,N}\|^2_{T,\infty}\big)\Delta_3^{2-2\beta}\Big)+C\Delta_3^{2\beta}\Big(\|B^{H,i}\|^2_{t_k,t_{k+1},\beta}+\Delta_3^{2-2\beta}\Big)\E\|\bar Z^{i,N}\|^2_{T,\beta}\\
		&\leq C\Big(\|B^{H,i}\|^2_{t_k,t_{k+1},\beta}+\Delta_3^{2-2\beta}+\Delta_3^{2\beta}\|B^{H,i}\|^2_{t_k,t_{k+1},\beta}+\Delta_3^2\Big).
	\end{aligned}
\end{equation}	
For every $l\in N$,
\begin{align*}
	\Pi_{2,l}&:=\E\|Z^{i,N}-\bar{Z}^{i,N}\|^2_{t_k+l\Delta_3,(t_k+(l+1)\Delta_3)\wedge t_{k+1},\beta}\Delta_3^{2\beta}\\
	&\leq C\Big(\Delta_3^{2\beta}\|B^{H,i}\|^2_{t_k,t_{k+1},\beta}+\Delta_3^{2}+\Delta_3^{4\beta}\|B^{H,i}\|^2_{t_k,t_{k+1},\beta}+\Delta_3^{2\beta+2}\Big)\\
	&\leq \Delta^{2\beta}.
\end{align*}
Consequently, it follows that
	\begin{equation}
	\begin{aligned}
		&\E\|Z^{i,N}-\bar Z^{i,N}\|^2_{t_k,t_{k+1},\infty}\\
		&\leq
		\big([\frac{\Delta}{\Delta_3}]+2 \big) \Big[\E|Z^{i,N}_{t_k}-\bar{Z}^{i,N}_{t_k}|^2+\sum_{l=0}^{[\frac{\Delta}{\Delta_3}]}\E\|X^{i,N}\|^2_{t_k+l\Delta_3,(t_k+(l+1)\Delta_3)\wedge t_{k+1},\beta}\Delta_3^{2\beta}\Big]\\
		&\leq \big([\frac{\Delta}{\Delta_3}]+2 \big)
		\Big[\sum_{l=0}^{[\frac{\Delta}{\Delta_3}]}\Pi_{2,l}\Big]\\
		&\leq \big([\frac{\Delta}{\Delta_3}]+2 \big)\big([\frac{\Delta}{\Delta_3}]+1 \big)\Delta^{2\beta}.
	\end{aligned}
\end{equation}
Notice that
\begin{equation}
	\begin{aligned}
		&[\frac{\Delta}{\Delta_3}]\leq \frac{\Delta}{\Delta_3}\\
		&\leq C\Bigg\{\Big\|B^{H,i}\|^{\frac{1}{\beta}}_{t_k,t_{k+1},\beta}\vee \Delta^{1-\beta}
		\vee  \Delta^{\frac{1}{1+\beta}}\vee \Big( \Delta^{\frac{1}{2}}\|B^{H,i}\|^{\frac{1}{2\beta}}_{t_k,t_{k+1},\beta}\Big)\Bigg\}\\
		&\leq  C\Bigg(\Delta^{\frac{H-\beta}{\beta}}+  \Delta^{1-\beta}+  \Delta^{\frac{1}{1+\beta}}+  \Delta^{\frac{H}{2\beta}}\Bigg)\\
		&\leq C\Big(\Delta^{\frac{H-\beta}{\beta}}\vee \Delta^{1-\beta}\Big).
	\end{aligned}
\end{equation}
We can see that for $\Delta<T$,
\begin{equation}
	\begin{aligned}
			\E\|Z^{i,N}-\bar Z^{i,N}\|^2_{t_k,t_{k+1},\infty}&\leq \big([\frac{\Delta}{\Delta_3}]+2 \big)\big([\frac{\Delta}{\Delta_3}]+1 \big)\Delta^{2\beta}\\
			&\leq C\Delta^{2\beta}.
	\end{aligned}
\end{equation}
Consistent with \eqref{eq-3.29}, we  obtain
\begin{equation}
	\begin{aligned}
		&\E\|Z^{i,N}-\bar Z^{i,N}\|^2_{t_k,t_{k+1},\beta}\\
		&\leq \Big(1+\big[\frac{\Delta}{\Delta_3}\big]\Big)^{2-2\beta}\max_{0\leq l \leq [\frac{\Delta}{\Delta_3}]}\E\|Z^{i,N}-\bar Z^{i,N}\|^2_{t_k+l\Delta_3,(t_k+(l+1)\Delta_3)\wedge t_{k+1},\beta}\\
		&\leq \Big(1+\big[\frac{\Delta}{\Delta_3}\big]\Big)^{2-2\beta}\Delta^{2\beta}\Delta_3^{-2\beta}\\
		&\leq \Big(1+\frac{\Delta}{\Delta_3}\Big)^{2-2\beta}\big(\frac{\Delta}{\Delta_3}\big)^{2\beta}\\
		&\leq C\Big(\Delta^{2(H-\beta)}\vee \Delta^{2\beta(1-\beta)}\Big).
	\end{aligned}
\end{equation}		
\end{proof}

\begin{theorem}\label{thm-4.3}
	Let $\frac{1}{2}<\beta<H$ and Assumption \ref{ass-2.3} hold. Then, for any interval $[t_k,t_{k+1}]$,
	\begin{equation}
		\begin{aligned}
			\sqrt{\E\|X^{i,N}- Z^{i,N}\|^2_{t_k,t_{k+1},\infty}}
			&\leq C\Delta^{(H-\beta)\wedge (\beta-\beta^2)},
		\end{aligned}
	\end{equation}
	and \begin{equation}
		\begin{aligned}
			\sqrt{\E\|X^{i,N}- Z^{i,N}\|^2_{t_k,t_{k+1},\beta}}
			&\leq C\Delta^{(H-\beta)\wedge (\beta-\beta^2)}.
		\end{aligned}
	\end{equation}
	\begin{proof}
By the same method as Theorem \ref{thm-3.4}, we can obtain that:
		\begin{align*}
			\sqrt{\E\|X^{i,N}- Z^{i,N}\|^2_{S,T,\infty}}\leq 2(1\wedge (T-S)^{\beta_1-\beta})
			\|\e^{(1+\log 2)(T-S)\Delta_2^{-1}}\|_{L^2(\Omega)}\W_{2,S,T,\beta}(\mu^N,\bar \mu^{Z,N}),
		\end{align*}
		and
		\begin{align*}
			\sqrt{\E\|X^{i,N}- Z^{i,N}\|^2_{S,T,\beta}}\leq 4(1\wedge (T-S)^{\beta_1-\beta})
			\|\e^{(1+\log 2)(T-S)\Delta_2^{-1}}\|_{L^2(\Omega)}\W_{2,S,T,\beta}(\mu^N,\bar \mu^{Z,N}).
		\end{align*}
		According to \eqref{3.10}, we can deduce that
		\begin{align*}
			&\W_{2,S,T,\beta}(\mu^N,\bar \mu^{Z,N})\\
			&\leq \W_{2,S,T,\beta}(\mu^N, \mu^{Z,N})+\W_{2,S,T,\beta}(\mu^{Z,N},\bar \mu^{Z,N})\\
			&\leq \sqrt{\E\|X^{i,N}- Z^{i,N}\|^2_{S,T,\infty}}+\sqrt{\E\|Z^{i,N}-\bar Z^{i,N}\|^2_{S,T,\infty}}\\
			&\quad+\sqrt{\E\|X^{i,N}- Z^{i,N}\|^2_{S,T,\beta}}+\sqrt{\E\|Z^{i,N}-\bar Z^{i,N}\|^2_{S,T,\beta}}.
		\end{align*}
		Using the same $R_0$ as in Theorem \ref{thm-3.4},	 for any $T-S\leq R_0$, we have
		\begin{equation}
			\sqrt{\E\|X^{i,N}- Z^{i,N}\|^2_{S,T,\infty}}\leq \sqrt{\E\|Z^{i,N}-\bar Z^{i,N}\|^2_{S,T,\infty}}+\sqrt{\E\|Z^{i,N}-\bar Z^{i,N}\|^2_{S,T,\beta}},
		\end{equation}
		and \begin{equation}
			\sqrt{\E\|X^{i,N}- Z^{i,N}\|^2_{S,T,\beta}}\leq 2\sqrt{\E\|Z^{i,N}-\bar Z^{i,N}\|^2_{S,T,\infty}}+2\sqrt{\E\|Z^{i,N}-\bar Z^{i,N}\|^2_{S,T,\beta}}.
		\end{equation}
		We can let $\Delta<R_0$. Thus, on an arbitrary interval $[t_k,t_{k+1}]$,
		\begin{equation}
			\begin{aligned}
				\sqrt{\E\|X^{i,N}- Z^{i,N}\|^2_{t_k,t_{k+1},\infty}}&\leq \sqrt{\E\|Z^{i,N}-\bar Z^{i,N}\|^2_{t_k,t_{k+1},\infty}}+\sqrt{\E\|Z^{i,N}-\bar Z^{i,N}\|^2_{t_k,t_{k+1},\beta}}\\
				&\leq C\Big( \Delta^{\beta}+\Big(\Delta^{H-\beta}\vee \Delta^{\beta(1-\beta)}\Big)\Big)\\
				&\leq C\Delta^{(H-\beta)\wedge (\beta-\beta^2)},
			\end{aligned}
		\end{equation}
		and \begin{equation}
			\begin{aligned}
				\sqrt{\E\|X^{i,N}- Z^{i,N}\|^2_{t_k,t_{k+1},\beta}}&\leq 2\sqrt{\E\|Z^{i,N}-\bar Z^{i,N}\|^2_{t_k,t_{k+1},\infty}}+2\sqrt{\E\|Z^{i,N}-\bar Z^{i,N}\|^2_{t_k,t_{k+1},\beta}}\\
				&\leq C\Delta^{(H-\beta)\wedge (\beta-\beta^2)}.
			\end{aligned}
		\end{equation}
			
	\end{proof}
\end{theorem}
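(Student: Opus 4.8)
The plan is to mimic the propagation-of-chaos argument of Theorem \ref{thm-3.4}, now comparing the interacting system $X^{i,N}$ of \eqref{eq-4.1} with the continuous Euler--Maruyama extension $Z^{i,N}$ of \eqref{eq-5.4}, and then feeding in the one-step discretization estimates of Lemma \ref{lem-4.2}. First I would subtract \eqref{eq-5.4} from \eqref{eq-4.1} to obtain, for $S\le s<t\le T$ with $t-s\le 1$,
\[
X_t^{i,N}-Z_t^{i,N}-(X_s^{i,N}-Z_s^{i,N})=\int_s^t\big(b(X_r^{i,N},\mu_r^N)-b(\bar Z_r^{i,N},\bar\mu_r^{Z,N})\big)\d r+\int_s^t\big(\sigma(X_r^{i,N},\mu_r^N)-\sigma(\bar Z_r^{i,N},\bar\mu_r^{Z,N})\big)\d B_r^{H,i}.
\]
I would bound the drift through the Lipschitz part of Assumption \ref{ass-2.3} and the stochastic integral through Lemma \ref{lem-2.5}, assigning the roles $(X,\mu)\mapsto(X^{i,N},\mu^N)$ and $(Y,\nu)\mapsto(\bar Z^{i,N},\bar\mu^{Z,N})$. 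This produces on the right-hand side the same constants $\Lambda_1,\dots,\Lambda_5$ of \eqref{1-EsNoi} together with the Hölder seminorms of $X^{i,N}-\bar Z^{i,N}$ and the measure distance $\W_{2,S,T,\beta}(\mu^N,\bar\mu^{Z,N})$.

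Next I would run the same Gronwall-type iteration on the mesh $\Delta_2$ of \eqref{1LeContr} used in Theorem \ref{thm-3.4} (via \cite[Proposition 3.8]{fan}), converting the seminorm inequalities into
\[
\sqrt{\E\|X^{i,N}-Z^{i,N}\|_{S,T,\infty}^2}\le 2(1\wedge(T-S)^{\beta_1-\beta})\big\|\e^{(1+\log2)(T-S)\Delta_2^{-1}}\big\|_{L^2(\Omega)}\,\W_{2,S,T,\beta}(\mu^N,\bar\mu^{Z,N}),
\]
with the analogous bound (carrying an extra factor $2$) for the $\beta$-seminorm. To close the estimate I would route $\bar\mu^{Z,N}$ through the empirical measure $\mu^{Z,N}$ of the continuous process $Z^{i,N}$: by \eqref{3.10}, together with Lemma \ref{lem3.1} after taking expectations and using that the particles are i.i.d.,
\[
\W_{2,S,T,\beta}(\mu^N,\bar\mu^{Z,N})\le\sqrt{\E\|X^{i,N}-Z^{i,N}\|_{S,T,\infty}^2}+\sqrt{\E\|X^{i,N}-Z^{i,N}\|_{S,T,\beta}^2}+\sqrt{\E\|Z^{i,N}-\bar Z^{i,N}\|_{S,T,\infty}^2}+\sqrt{\E\|Z^{i,N}-\bar Z^{i,N}\|_{S,T,\beta}^2}.
\]

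With $C(T-S):=(1\wedge(T-S)^{\beta_1-\beta})\|\e^{(1+\log2)(T-S)\Delta_2^{-1}}\|_{L^2(\Omega)}\to0$ as $T-S\to0^+$, I would reuse the threshold $R_0=\inf\{R>0:C(R)\ge\tfrac18\}$ of Theorem \ref{thm-3.4}: for $T-S\le R_0$ the self-referential terms $\sqrt{\E\|X^{i,N}-Z^{i,N}\|^2}$ on the right carry coefficient $\le\tfrac12$ and may be absorbed into the left, leaving only the discretization terms $\sqrt{\E\|Z^{i,N}-\bar Z^{i,N}\|^2}$. Finally I would take $\Delta<R_0$ so that every mesh interval $[t_k,t_{k+1}]$ is admissible, apply the absorbed inequality on $[t_k,t_{k+1}]$, and insert the one-step bounds of Lemma \ref{lem-4.2}, namely $\sqrt{\E\|Z^{i,N}-\bar Z^{i,N}\|_{t_k,t_{k+1},\infty}^2}\le C\Delta^{\beta}$ and $\sqrt{\E\|Z^{i,N}-\bar Z^{i,N}\|_{t_k,t_{k+1},\beta}^2}\le C(\Delta^{H-\beta}\vee\Delta^{\beta-\beta^2})$; adding these and retaining the smallest exponent for small $\Delta$ yields the rate $\Delta^{(H-\beta)\wedge(\beta-\beta^2)}$ in both seminorms.

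The step I expect to be the main obstacle is the \emph{frozen argument} $\bar Z^{i,N}$ in the Euler coefficients: Lemma \ref{lem-2.5} naturally delivers $\|X^{i,N}-\bar Z^{i,N}\|$ and $\W_{2,S,T,\beta}(\mu^N,\bar\mu^{Z,N})$, not the target quantity $\|X^{i,N}-Z^{i,N}\|$, so the correction $Z^{i,N}-\bar Z^{i,N}$ must be inserted precisely --- both through the path seminorms and through the auxiliary measure $\mu^{Z,N}$ --- so that exactly the one-step error of Lemma \ref{lem-4.2} enters, while the residual $\|X^{i,N}-Z^{i,N}\|$ contributions remain absorbable under the same smallness threshold $R_0$. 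Tracking the three competing exponents $\beta$, $H-\beta$ and $\beta-\beta^2$ through this absorption is precisely what pins down the final convergence order.
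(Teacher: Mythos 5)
Your proposal follows the paper's proof essentially verbatim: the same reduction to the contraction estimates of Theorem \ref{thm-3.4} with $\W_{2,S,T,\beta}(\mu^N,\bar\mu^{Z,N})$ on the right, the same triangle-inequality passage through $\mu^{Z,N}$, the same absorption under the threshold $R_0$, and the same insertion of Lemma \ref{lem-4.2} to extract the rate $\Delta^{(H-\beta)\wedge(\beta-\beta^2)}$. The only difference is that you explicitly flag the frozen-argument issue with $\bar Z^{i,N}$, which the paper passes over silently; otherwise the two arguments coincide.
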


Through trigonometric inequality, Theorem \ref{thm-3.4} and Theorem \ref{thm-4.3}, one can obtain the following convergence result of the proposed numerical method for the \eqref{eq-1}.

\begin{theorem}
	It holds under $\frac{1}{2}<\beta<H$, Assumption \ref{ass-2.3} and for an arbitrary interval $[t_k,t_{k+1}]$ that
	\begin{equation}
		\sup_{i\in \{1,...,N\}}\E\|X^i-Z^{i,N}\|^2_{t_k,t_{k+1},\infty}\leq C\Big( \epsilon_N+\Delta^{2(H-\beta)\wedge 2(\beta-\beta^2)}\Big),
	\end{equation}
	and
	\begin{equation}
		\sup_{i\in \{1,...,N\}}\E\|X^i-Z^{i,N}\|^2_{t_k,t_{k+1},\beta}\leq C \big( \epsilon_N+\Delta^{2(H-\beta)\wedge 2(\beta-\beta^2)}\big).
	\end{equation}
\end{theorem}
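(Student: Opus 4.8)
The plan is to bound the target error by the triangle inequality, routing the distance between the true non-interacting particle $X^i$ and the numerical solution $Z^{i,N}$ through the interacting particle $X^{i,N}$ as an intermediate term. Concretely, I would write
\begin{align*}
	\|X^i - Z^{i,N}\|_{t_k,t_{k+1},\infty} \leq \|X^i - X^{i,N}\|_{t_k,t_{k+1},\infty} + \|X^{i,N} - Z^{i,N}\|_{t_k,t_{k+1},\infty},
\end{align*}
together with the analogous inequality for the $\|\cdot\|_{t_k,t_{k+1},\beta}$ seminorm. Squaring and using the elementary bound $(a+b)^2 \leq 2a^2 + 2b^2$ then reduces the estimate to controlling the two terms separately.

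For the first term I would invoke Theorem \ref{thm-3.4}: since we have already arranged $\Delta = t_{k+1}-t_k < R_0$ in Theorem \ref{thm-4.3}, the propagation-of-chaos estimate applies verbatim on the subinterval $[t_k,t_{k+1}]$ (playing the role of $[S,T]$), giving $\E\|X^i - X^{i,N}\|^2_{t_k,t_{k+1},\infty} \leq C\epsilon_N$ and $\E\|X^i - X^{i,N}\|^2_{t_k,t_{k+1},\beta} \leq C\epsilon_N$ uniformly in $i$. For the second term Theorem \ref{thm-4.3} directly yields $\E\|X^{i,N} - Z^{i,N}\|^2_{t_k,t_{k+1},\infty} \leq C\Delta^{2(H-\beta)\wedge 2(\beta-\beta^2)}$ and the identical bound for the $\beta$-seminorm.

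Combining these, taking expectations and then the supremum over $i \in \{1,\dots,N\}$, I would conclude
\begin{align*}
	\sup_{i} \E\|X^i - Z^{i,N}\|^2_{t_k,t_{k+1},\infty} \leq 2C\epsilon_N + 2C\Delta^{2(H-\beta)\wedge 2(\beta-\beta^2)},
\end{align*}
and identically for the $\beta$-seminorm, which is the claim after renaming the constant. The only point requiring care --- and the main (though mild) obstacle --- is verifying that the constant $C$ and the threshold $R_0$ inherited from Theorem \ref{thm-3.4} are uniform across all the subintervals $[t_k,t_{k+1}]$, so that a single $C$ serves simultaneously for every $k$. This holds because $R_0$ is defined through the map $R \mapsto C(R)$ and the coefficient bounds in Assumption \ref{ass-2.3}, both independent of the index $k$, while the Fournier--Guillin rate $\epsilon_N$ entering Lemma \ref{lem3.3} is likewise interval-independent; hence the two ingredient bounds can be added without any $k$-dependent degradation of the constant.
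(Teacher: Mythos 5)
Your proposal is correct and coincides with the paper's own (one-line) argument: the paper likewise obtains this theorem by the triangle inequality through the intermediate term $X^{i,N}$, combining Theorem \ref{thm-3.4} with Theorem \ref{thm-4.3}. Your additional remark on the uniformity of $R_0$ and the constants over the subintervals $[t_k,t_{k+1}]$ is a sensible point of care that the paper leaves implicit.
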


\section{Numerical examples}
\begin{example}
	In this section, we perform numerical experiments to verify our theoretical results. Consider the following one-dimensional mean field model driven by fBm,
	\begin{align}\label{5.1}
		\begin{aligned}
			\d {X}_t&=\Big(X_t+\int_\R z\mu(\d z)\Big)\d t+\Big( \sin\big( X_t+\int_\R z\mu(\d z)\big)\Big)\d B^{H}_t
		\end{aligned}
	\end{align}
	for $H\in (\frac{\sqrt{5}-1}{2},1)$, where $\mu=\L(X_t)$. Define $b(x,\mu)=x+\int_{0}^{t}z\mu(\d z)$ and $\sigma(x,\mu)=\sin(x+\int_{0}^{t}z\mu(\d z))$.
	\begin{align*}
		|b(x,\mu)-b(y,\nu)|&\leq \big|x+\int_\R z\mu(\d z)-y-\int_\R z'\nu(\d z')\big|\\
		&\leq \big|(x-y)+\int_\R z\mu(\d z)-\int_\R z'\nu(\d z')\big|\\
		&\leq \big|(x-y)+\int_\R |z-z'|\pi(\d z,\d z')\big|\\
		&\leq |x-y|+W_2(\mu,\nu).
	\end{align*}
	 We compute the derivatives of $\sigma(x,\mu)$.
	\begin{align*}
		&\frac{\d}{\d x}\sigma(x,\mu)=\cos\big(  x+\int_\R z\mu(\d z)\big),\quad
        D^L\sigma(x,\mu)(y)=\cos\big(  x+\int_\R z\mu(\d z)\big)y,\\
		&\frac{\d^2}{\d x^2}\sigma(x,\mu)=-\sin\big(  x+\int_\R z\mu(\d z)\big), \quad D^L(\frac{\d}{\d x}\sigma(x,\mu))(y)=-\sin\big(  x+\int_\R z\mu(\d z)\big)y\\
		&\frac{\d}{\d x}D^L\sigma(x,\mu)(y)=-\sin\big(  x+\int_\R z\mu(\d z)\big)y, \quad \frac{\d}{\d y}D^L\sigma(x,\mu)(y)=\cos\big(  x+\int_\R z\mu(\d z)\big),\\
		& D^{L,2}\sigma(x,\mu)(y,z)=-\sin\big(  x+\int_\R z\mu(\d z)\big)yz.
	\end{align*}
	It can be verified that these derivatives are all continuous and bounded. Thus, Assumption \ref{ass-2.3} is satisfied.
	The corresponding interacting particle system is written as
	\begin{align}\label{5.2}
		\begin{aligned}
			\d X^{i,N}_t
			=\Big(X^{i,N}_t+\frac{1}{N}\sum_{j=1}^NX^{j,N}_t\Big)\d t+
			\Big( \sin\big(X^{i,N}_t+\frac{1}{N}\sum_{j=1}^NX^{j,N}_t\big)\Big)\d B^{H,i}_t,
		\end{aligned}
	\end{align}
	where $i\in 1,...,N$. The EM scheme for interacting particle system reads
	\begin{align}\label{5.3}
		\begin{aligned}
			\d Z^{i,N}_{t_{k+1}}
			=\Big(Z^{i,N}_{t_k}+\frac{1}{N}\sum_{j=1}^NZ^{j,N}_{t_k}\Big)\Delta+
			\Big( \sin\big(Z^{i,N}_{t_k}+\frac{1}{N}\sum_{j=1}^NZ^{j,N}_{t_k}\big)\Big)\Delta B^{H,i}_{t_k}.
		\end{aligned}
	\end{align}
	By Theorem \ref{thm-3.4},  for $H=0.7,\beta=0.6$; $H=0.8,\beta=0.7$; $H=0.9,\beta=0.8$ and $T-S\leq R_0$, we can obtain
		\begin{equation}
		\begin{aligned}
			\max_{1\leq k \leq N}{\E\|X^i-X^{i,N}\|^2_{S,T,\infty}}+\max_{1\leq k \leq N}{\E\|X^i-X^{i,N}\|^2_{S,T,\beta}}\leq C\epsilon_N.
		\end{aligned}
	\end{equation}
Similarly, by Theorem \ref{thm-4.3}, we define the error
\begin{align*}
	E_1 =\big(\E\|X^i-Z^{i,N}\|^2_{t_k,t_{k+1},\infty}\big)^{\frac{1}{2}}\leq C\sqrt{ \epsilon_N+\Delta^{2(H-\beta)\wedge 2(\beta-\beta^2)}},
\end{align*}
and
\begin{align*}
	E_2=\big(\E\|X^i-Z^{i,N}\|^2_{t_k,t_{k+1},\beta} \big)^\frac{1}{2}\leq C\sqrt{ \epsilon_N+\Delta^{2(H-\beta)\wedge 2(\beta-\beta^2)}}.
\end{align*}
\end{example}

\begin{figure}[htbp]
\centering
\begin{minipage}[t]{0.3\linewidth}
\centering
\includegraphics[height=4cm, width=5.2cm]{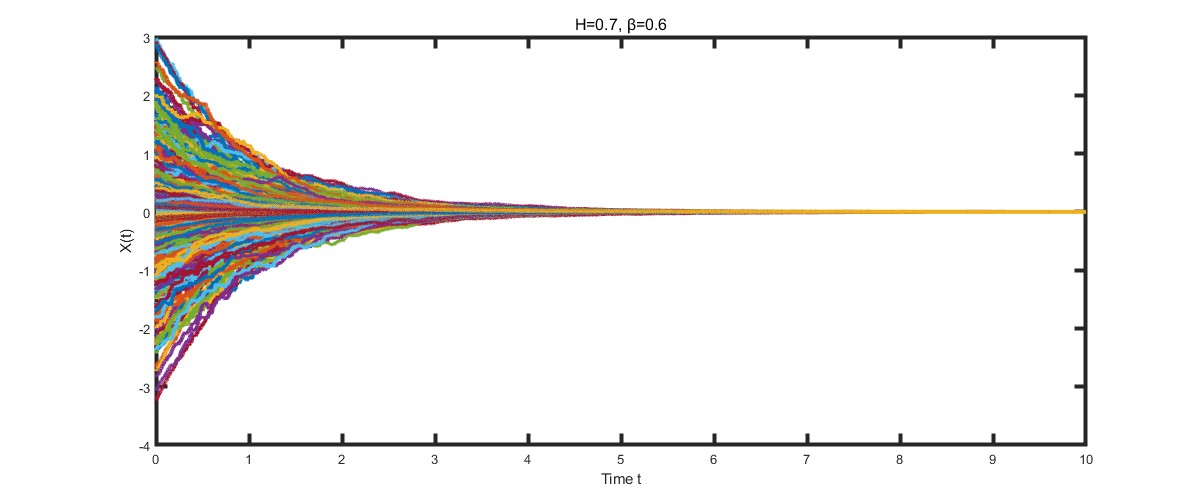}
\end{minipage}%
\begin{minipage}[t]{0.3\linewidth}
\centering
\includegraphics[height=4cm, width=5.2cm]{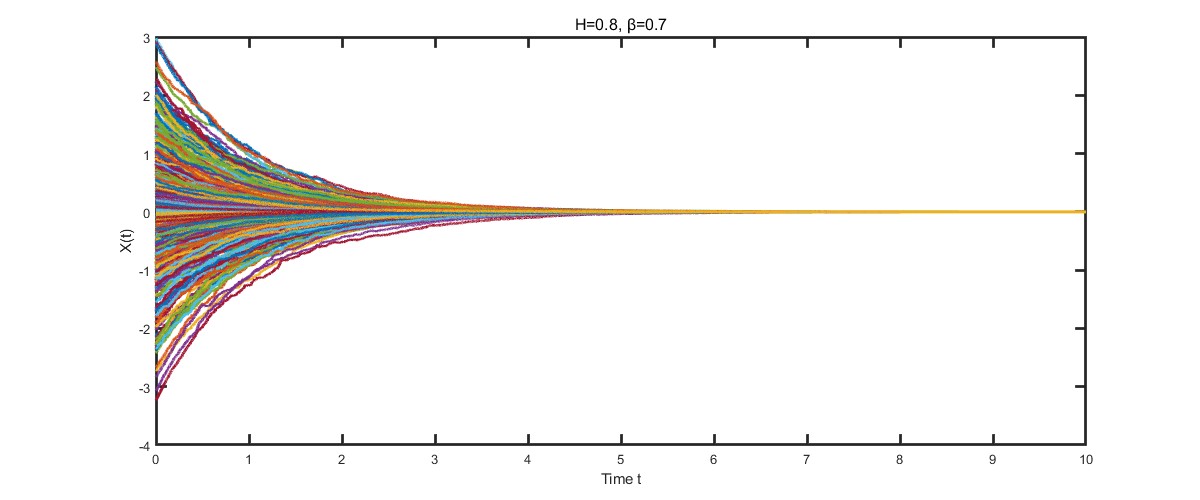}
\end{minipage}%
\begin{minipage}[t]{0.3\linewidth}
\centering
\includegraphics[height=4cm, width=5.2cm]{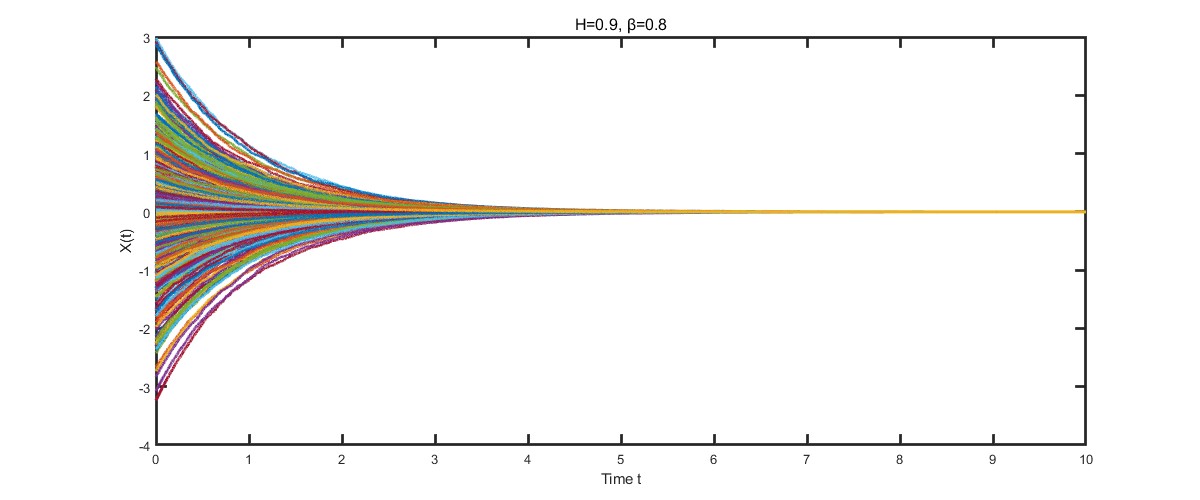}
\end{minipage}%
\centering\\
Figure 1: \begin{scriptsize} Computer simulations of the path of $X_t$ from (\ref{5.2}) are performed for $H=0.7$, $\beta=0.6$; $H=0.8$, $\beta=0.7$; and $H=0.9$, $\beta=0.8$, respectively.\end{scriptsize}
\end{figure}

\begin{figure}[htbp]
\centering
\begin{minipage}[t]{0.3\linewidth}
\centering
\includegraphics[height=4cm, width=5.2cm]{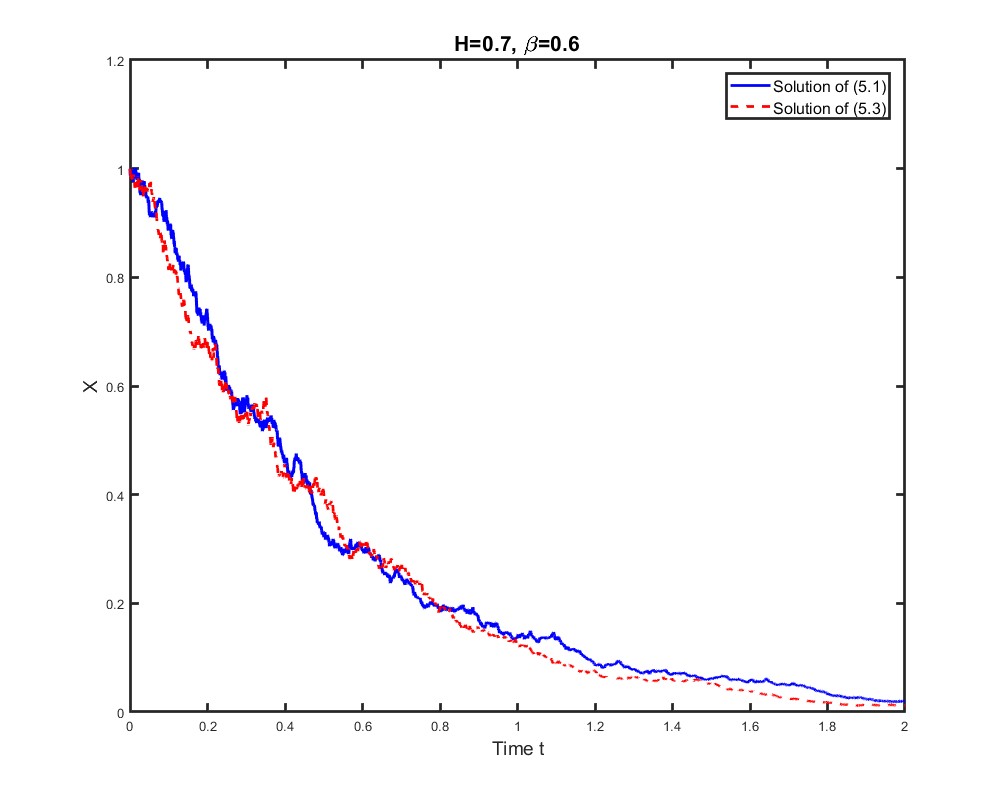}
\end{minipage}%
\begin{minipage}[t]{0.3\linewidth}
\centering
\includegraphics[height=4cm, width=5.2cm]{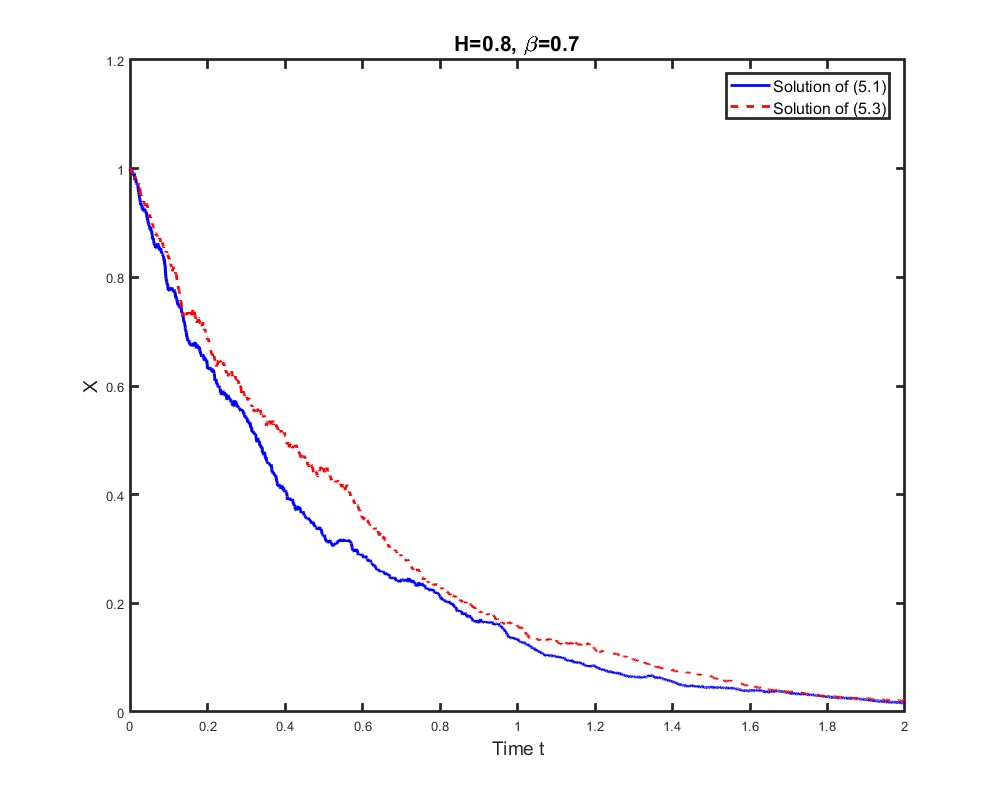}
\end{minipage}%
\begin{minipage}[t]{0.3\linewidth}
\centering
\includegraphics[height=4cm, width=5.2cm]{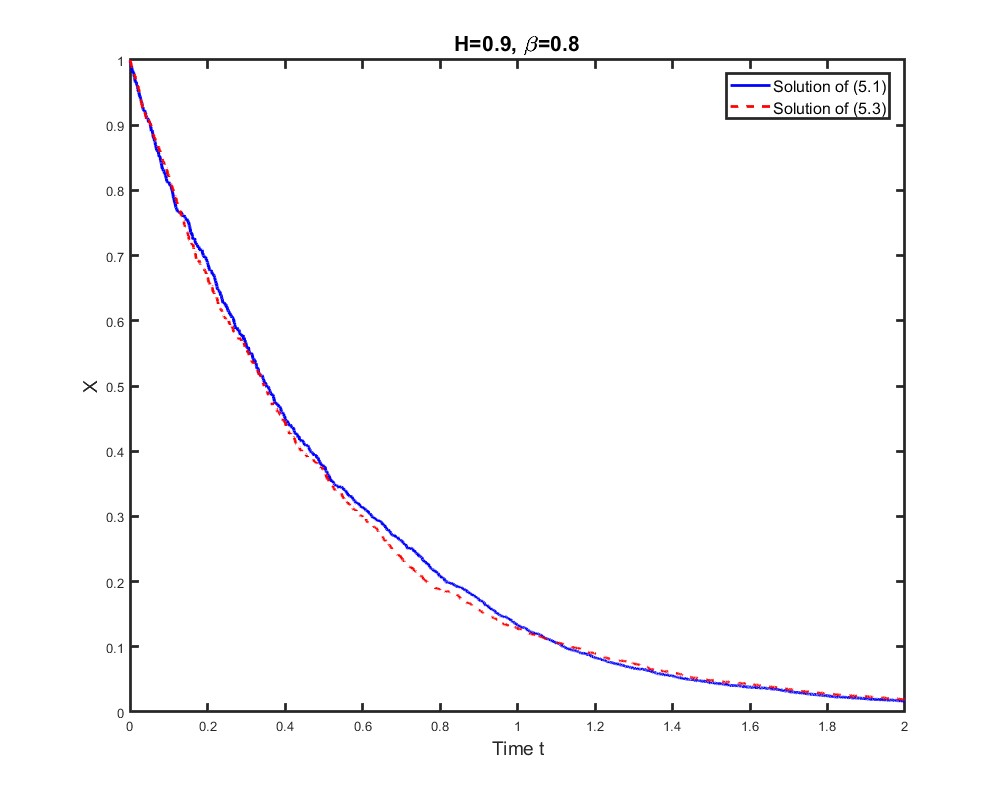}
\end{minipage}%
\centering\\
Figure 2: \begin{scriptsize} Comparisons of the solutions of (\ref{5.1}) and (\ref{5.3})  is presented  for $H=0.7$, $\beta=0.6$; $H=0.8$, $\beta=0.7$; and $H=0.9$, $\beta=0.8$, respectively, using a particle number of $N=1000$. \end{scriptsize}
\end{figure}

Figure 1 shows the trajectory of a single simulation respectively, for $H = 0.7$, $H = 0.8$, and $H = 0.9$, while the initial value $X_0$ follows a standard normal distribution. We can observe that different Hurst parameters generate different impacts on clustering. Figure 2  illustrates the convergence of the EM method under different values of $H$.

\bigskip

$\begin{array}{cc}
			\begin{minipage}[t]{1\textwidth}
				{\bf Guangjun Shen}\\
				Department of Mathematics, Anhui Normal University, Wuhu 241002, China\\
				\texttt{gjshen@ahnu.edu.cn}
			\end{minipage}
			\hfill
		\end{array}$

$\begin{array}{cc}
			\begin{minipage}[t]{1\textwidth}
				{\bf Jiangpeng Wang}\\
				Department of Mathematics, Anhui Normal University, Wuhu 241002, China\\
				\texttt{jpwangmath@163.com}
			\end{minipage}
			\hfill
		\end{array}$

$\begin{array}{cc}
			\begin{minipage}[t]{1\textwidth}
				{\bf Xuekang Zhang}\\
				School of Mathematics-Physics and Finance, Anhui Polytechnic University, Wuhu 241000, China\\
				\texttt{xkzhang@ahpu.edu.cn}
			\end{minipage}
			\hfill
		\end{array}$
\end{document}